
\documentclass{amsart}
\usepackage[margin=3.5cm]{geometry}
\usepackage{amsmath,amssymb}

\usepackage{tabularx}
\usepackage{mathtools} 

\usepackage{xcolor}
\usepackage{hyperref}
\definecolor{darkgreen}{rgb}{0,0.4,0}
\definecolor{BrickRed}{rgb}{0.65,0.08,0}
\hypersetup{colorlinks=true,linkcolor=blue,citecolor=darkgreen,filecolor=BrickRed,urlcolor=darkgreen}


\newtheorem{theorem}{Theorem}[section]
\newtheorem{proposition}[theorem]{Proposition}

\newtheorem{lemma}[theorem]{Lemma}

\theoremstyle{definition}
\newtheorem{remark}{Remark}
\numberwithin{equation}{section}
\newcommand{\tO}{\mathtt 0}                       
\newcommand{\tL}{\mathtt 1}                       

\DeclareMathOperator{\e}{\mathrm{e}}                
\newcommand{\LandauO}{\mathcal{O}}                  
\newcommand{\Landauo}{\mathrm{o}}                   
\newcommand{\E}{\mathbb E}

\newcommand{\lbracket}{\left[\hspace{-0.4em}\left[}
\newcommand{\rbracket}{\right]\hspace{-0.4em}\right]}

\allowdisplaybreaks     
\title[Divisibility of binomial coefficients]%
{Divisibility of binomial coefficients by powers of two}
\author{Lukas Spiegelhofer}
\address{Institut f\"ur diskrete Mathematik und Geometrie,
Technische Universit\"at Wien,
Wiedner Hauptstrasse 8--10, 1040 Wien, Austria}
\author{Michael Wallner}
\address{Laboratoire d'Informatique de Paris Nord, Universit\'e Paris 13, 99 Avenue Jean-Baptiste Cl\'ement, 93430 Villetaneuse, France}
\thanks{
The first author acknowledges support by the project MUDERA (Multiplicativity, Determinism, and Randomness),
which is a joint project between the FWF (Austrian Science Fund) and the ANR (Agence Nationale de la Recherche), and by project F5502-N26 (FWF), which is a part of the Special Research Program ``Quasi Monte Carlo Methods: Theory and Applications''.
The second author acknowledges support by Project SFB F50-03, which is a part of the Special Research Program ``Algorithmic and Enumerative Combinatorics''.
}


\keywords{binomial coefficients, central limit law, divisibility by powers of primes}
\subjclass[2010]{11B65, 05A16 (primary), 11A63, 11B50 (secondary)}

\begin{document}
\maketitle
\begin{abstract}
For nonnegative integers $j$ and $n$ let $\Theta(j,n)$ be the number of entries in the $n$-th row of Pascal's triangle that are not divisible by $2^{j+1}$.
In this paper we prove that the family $j\mapsto\Theta(j,n)$ usually follows a normal distribution.
The method used for proving this theorem involves the computation of first and second moments of $\Theta(j,n)$, and uses asymptotic analysis of multivariate generating functions by complex analytic methods, building on earlier work by Drmota (1994) and Drmota, Kauers and Spiegelhofer (2016).
\end{abstract}
\pagestyle{myheadings}
\thispagestyle{plain}
\markboth{SPIEGELHOFER AND WALLNER}{DIVISIBILITY OF BINOMIAL COEFFICIENTS}
\section{Introduction}
Divisibility of binomial coefficient by powers of primes is a notion strongly linked to the base-$p$ expansion of integers.
This connection is highlighted by Kummer's famous result~\cite{K1852} stating that the highest power $m$ of a prime $p$ dividing a binomial coefficient $\binom nt$ equals the number of \emph{borrows} occurring in the subtraction $n-t$ in base $p$.
No less well-known is Lucas' congruence~\cite{L1878}:
\begin{equation}\label{eqn:lucas}
\binom nt\equiv\binom{n_{\nu-1}}{t_{\nu-1}}\cdots \binom{n_0}{t_0}\bmod p,
\end{equation}
where $n=(n_{\nu-1}\cdots n_0)_p$ and $t=(t_{\nu-1}\cdots t_0)_p$ are the expansions of $n$ and $t$ in base $p$.
A wealth of classical results related to divisibility of binomial coefficients can be found in Dickson's book~\cite{D1919}.
More recent surveys concerning binomial coefficients modulo prime powers were written by Granville~\cite{G1997} and Singmaster~\cite{Si1980}.
We note that Kummer's theorem has been generalized to $q$-multinomial coefficients by Fray~\cite{F1967} and to generalised binomial coefficients by Knuth and Wilf~\cite{KW1989}.
Also, Lucas' congruence has been extended in different directions, see~\cite{DW1990,F1967,G1992,G1997,K1968}.

The main object of study in the present paper is the number of binomial coefficients $\binom nk$ exactly divisible by a power of $2$.
More generally, for nonnegative integers~$j$ and $n$ and a prime~$p$ we define
\begin{equation}\label{eqn_def_row_count}
\vartheta_p(j,n)
\coloneqq\left\lvert\left\{t\in\{0,\ldots,n\}:\nu_p\left(\!\binom nt\!\right)=j\right\}\right\rvert,
\end{equation}
where $\nu_p(m)$ is the largest $k$ such that $p^k\mid m$.
Moreover, we define partial sums:

\begin{equation}\label{eqn_def_row_count2}
\Theta_p(j,n)
\coloneqq\sum_{0\leq i\leq j}\vartheta_p(i,n)
=\left\lvert\left\{t\in\{0,\ldots,n\}:2^{j+1}\nmid\binom nt\right\}\right\rvert.
\end{equation}

Lucas' congruence yields a formula for the case $j=0$ (see Fine~\cite{F1947}):
\begin{equation}\label{eqn:fine}
\vartheta_p(0,n)=
\prod_{0\leq i<\nu}(n_i+1)
=2^{\lvert n\rvert_1}3^{\lvert n\rvert_2}4^{\lvert n\rvert_3}\cdots p^{\lvert n\rvert_{p-1}},
\end{equation}
where $\lvert n\rvert_d$ is the number of times the digit $d$ occurs in the base-$p$ expansion of $n$.
In particular, writing $s_2(n)=\lvert n\rvert_1$ for the binary sum-of-digits function, we obtain (see Glaisher~\cite{G1899})
\begin{equation}\label{eqn_glaisher}
\vartheta_2(0,n)=
2^{s_2(n)}.
\end{equation}
For $j\geq 1$, the quantities $\vartheta_p(j,n)$ and $\Theta_p(j,n)$ can be expressed using block-counting functions.
For a finite word $w$ on the symbols $0,\ldots,p-1$, containing at least one symbol $\neq 0$, and a nonnegative integer $n$, we define $\lvert n\rvert_w$ as the number of times the word $w$ occurs as a contiguous subword of the binary expansion of $n$.
It was proved by Rowland~\cite{R2011}, and implicitely by Barat and Grabner~\cite{BG2001}, that $\vartheta_p(j,n)/\vartheta_p(0,n)$ is given by a polynomial $P_j$ in the variables $X_w$,
where~$w$ are certain finite words in $\{0,\ldots,p-1\}$,
and each variable $X_w$ is set to $\lvert n\rvert_w$.
For example, we have the following formulas, found by Howard~\cite{H1971}:
\begin{align*}
\frac{\vartheta_2(1,n)}{\vartheta_2(0,n)}&=\frac 12\lvert n\rvert_{\tL\tO}\\
\frac{\vartheta_2(2,n)}{\vartheta_2(0,n)}&=
-\frac 18\lvert n\rvert_{\tL\tO}
+\frac 18\lvert n\rvert_{\tL\tO}^2
+\lvert n\rvert_{\tL\tO\tO}
+\frac 14\lvert n\rvert_{\tL\tL\tO},\\[2mm]
\frac{\vartheta_2(3,n)}{\vartheta_2(0,n)}&=
\frac 1{24}\lvert n\rvert_{\tL\tO}
-\frac 1{16}\lvert n\rvert_{\tL\tO}^2
-\frac 12\lvert n\rvert_{\tL\tO\tO}
-\frac 18\lvert n\rvert_{\tL\tL\tO}
+\frac 1{48}\lvert n\rvert_{\tL\tO}^3
+\frac 12\lvert n\rvert_{\tL\tO}\lvert n\rvert_{\tL\tO\tO}\\
&+\frac 18\lvert n\rvert_{\tL\tO}\lvert n\rvert_{\tL\tL\tO}
+2\lvert n\rvert_{\tL\tO\tO\tO}
+\frac 12\lvert n\rvert_{\tL\tO\tL\tO}
+\frac 12\lvert n\rvert_{\tL\tL\tO\tO}
+\frac 18\lvert n\rvert_{\tL\tL\tL\tO}.
\end{align*}
The number of terms in these expressions is sequence \texttt{A275012} in Sloane's OEIS~\cite{OEIS} and can be seen as a measure of complexity of the sequence $n\mapsto \vartheta_2(j,n)$. This was noted by Rowland (see the comments to \texttt{A001316}, \texttt{A163000} and \texttt{A163577} in the OEIS).
In the recent paper~\cite{SW2016} the authors prove a structural result on the polynomials representing $\vartheta_p(j,n)/\vartheta_p(0,n)$,
which also allows to compute them efficiently.
We note that the above representation as a polynomial implies that $n\mapsto \vartheta_p(j,n)$ is a $p$-regular sequence in the sense of Allouche and Shallit~\cite{AS1992}.

The above formulas are \emph{exact}; in this paper we want to consider matters from a more analytical point of view and we are interested in asymptotic properties of divisibility of binomial coefficients.
For some asymptotic results on binomial coefficients modulo primes and prime powers we refer the reader to the papers by Holte~\cite{H1997}, Barat and Grabner~\cite{BG2001,BG2016} and Greinecker~\cite{G2017}.
Our main result is related to a theorem proved by Singmaster~\cite{S1974} saying that any given integer~$d$ divides almost all binomial coefficients.
From his Theorem~1(C) it follows that, for all $j\geq 0$,
\begin{equation}\label{eqn_singmaster}
\frac 1N\sum_{0\leq n<N}\frac{\Theta_p(j,n)}{n+1}=\Landauo(1).
\end{equation}
(Note that there are $n+1$ elements in row number $n$ of Pascal's triangle.)
This behaviour is clearly different from the divisibility pattern of the sequence of positive integers: we have
\[
\lim_{N\rightarrow \infty}
\frac 1N \left\lvert\left\{0\leq n<N:
\nu_2(n)\leq j
\right\}\right\rvert
=
\lim_{N\rightarrow \infty}
\frac 1N \left\lvert\left\{0\leq n<N:
2^{j+1}\nmid n
\right\}\right\rvert
= 1-\frac 1{2^{j+1}}.
\]

We are interested in the ``typical'' divisibility of a binomial coefficient:
our main theorem states that the probability distribution defined by
$j\mapsto \Theta_2(j,n)/(n+1)$ usually is close to a normal distribution
with expected value $\log n/\log 2-s_2(n)$ and variance $\log n/\log 2$.
This is a refinement of the case $p=2$ of Equation~\eqref{eqn_singmaster}. 

A related result, concerning \emph{columns} of Pascal's triangle, was proved by Emme and Hubert~\cite{EH2016}, continuing work by Emme and Prikhod'ko~\cite{EP2015}.
In that paper, Emme and Hubert consider the quantity
\[\mu_a(d)=\lim_{N\rightarrow\infty} \frac 1N
\left\lvert \{n<N:s_2(n+a)-s_2(n)=d\}\right\rvert\]
and they prove a central limit type theorem for these values.
Note that the connection to columns in Pascal's triangle is given by the identity
$s_2(n+a)-s_2(n)=s_2(a)-\nu_2\binom{n+a}a$, which can be derived from Legendre's relation $\nu_2(n!)=n-s_2(n)$.

\textit{Notation.}
In this paper, $s_2(n)$ denotes the binary sum-of-digits function, that is, the number of $\tL$s in the binary expansion of $n$.
Moreover, $\nu_p(m)$ is the maximal $k$ such that $p^k\mid m$.
We write $\nu_p\binom nk\coloneqq\nu_p\bigl(\binom nk\bigr)$.
\section{The main result}
Let $\Phi(x)=\frac 1{\sqrt{2\pi}}\int_{-\infty}^x \e^{-t^2/2}\,\mathrm d t$
and set $\Theta_2(j,n)=0$ for $j<0$.
For convenience, we define $\Theta_p(x,n)\coloneqq \Theta_p(\lfloor x\rfloor,n)$ for real $x$.
Then the following theorem holds.
\begin{theorem}\label{thm_main}
Assume that $\varepsilon>0$.
For an integer $\lambda\geq 0$ we set 
$I_\lambda=[2^\lambda,2^{\lambda+1})$. 
Then
\begin{equation*}
\left \lvert\left\{n\in I_\lambda:
\left \lvert \frac{\Theta_2(\lambda-s_2(n)+u,n)}{n+1}-\Phi\left(\frac u{\sqrt{\lambda}}\right)\right \rvert\geq \varepsilon\text{ for some }u\in\mathbb R\right\}\right \rvert
=\LandauO\left(\frac {2^\lambda}{\sqrt{\lambda}}\right),
\end{equation*}
where the implied constant may depend on $\varepsilon$.

\end{theorem}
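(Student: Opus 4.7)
The plan is a second-moment method that reduces the uniform-in-$u$ statement to estimates on the first two moments of $F_n(u):=\Theta_2(\lambda-s_2(n)+u,n)/(n+1)$ as $n$ ranges over $I_\lambda$. Since both $F_n(\cdot)$ and $\Phi(\cdot/\sqrt\lambda)$ are non-decreasing with values in $[0,1]$, a standard monotone interpolation argument shows that $\sup_u|F_n(u)-\Phi(u/\sqrt\lambda)|<\varepsilon$ holds whenever $|F_n(u_k)-\Phi(u_k/\sqrt\lambda)|<\varepsilon/2$ at $N_\varepsilon=\LandauO_\varepsilon(1)$ sample points $u_1<\cdots<u_{N_\varepsilon}$ chosen so that $\Phi$ increases by less than $\varepsilon/2$ between consecutive points and is within $\varepsilon/2$ of $\{0,1\}$ outside $[u_1,u_{N_\varepsilon}]$. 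By a union bound over these $N_\varepsilon$ points it suffices to show that for each fixed $u$ the exceptional set has size $\LandauO_\varepsilon(2^\lambda/\sqrt\lambda)$, and by Chebyshev's inequality this in turn follows from
\[
\sum_{n\in I_\lambda}\bigl(F_n(u)-\Phi(u/\sqrt\lambda)\bigr)^2=\LandauO(2^\lambda/\sqrt\lambda)
\]
with constant uniform in $u$. Expanding the square, the proof reduces to the two asymptotic moment identities
\[
\frac{1}{2^\lambda}\sum_{n\in I_\lambda}F_n(u)^k=\Phi(u/\sqrt\lambda)^k+\LandauO(1/\sqrt\lambda),\qquad k=1,2.
\]

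For $k=1$, I would interpret $M_1(u):=2^{-\lambda}\sum_{n\in I_\lambda}F_n(u)$ as the probability, under the joint measure with $n$ uniform in $I_\lambda$ and $T$ uniform in $\{0,\ldots,n\}$, that $\nu_2\binom{n}{T}\leq\lambda-s_2(n)+u$. Combining Kummer's theorem with Legendre's formula gives $\nu_2\binom{n}{T}=s_2(T)+s_2(n-T)-s_2(n)$, so the event becomes $s_2(T)+s_2(n-T)\leq\lambda+u$. The left-hand side is a sum of order $\lambda$ weakly dependent binary-digit contributions, and the corresponding multivariate generating function, tracking $n$ against $s_2(T)+s_2(n-T)$ through an auxiliary complex variable, can be analysed by the singularity/saddle-point techniques of Drmota~(1994), refined in Drmota, Kauers and Spiegelhofer~(2016); one obtains the Gaussian limit with a Berry--Esseen remainder of order $1/\sqrt\lambda$.

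The principal obstacle is the case $k=2$, where one must show that
\[
M_2(u):=\frac{1}{2^\lambda}\sum_{n\in I_\lambda}\frac{1}{(n+1)^2}\left\lvert\{(T_1,T_2)\in\{0,\ldots,n\}^2:\nu_2\tbinom{n}{T_i}\leq\lambda-s_2(n)+u,\ i=1,2\}\right\rvert
\]
agrees with $\Phi(u/\sqrt\lambda)^2$ up to an error $\LandauO(1/\sqrt\lambda)$. Conditional on $n$, the pair $(T_1,T_2)$ is already independent; the residual correlation that must vanish in the limit is carried entirely by the common digit-block structure of $n$, which is shared between the two marginals. One therefore needs a trivariate generating function encoding the joint binary digit structure of the triple $(n,T_1,T_2)$ subject to $T_i\leq n$, together with a saddle-point analysis in which the cross-terms coupling $T_1$ and $T_2$ through $n$ must be shown to contribute only to lower order. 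This decorrelation step is a nontrivial refinement of the methods of~\cite{SW2016} and of Drmota, Kauers and Spiegelhofer~(2016), and is where the bulk of the analytic work lies; once it is completed, the moment estimates combine via Chebyshev to yield the exceptional-set bound $\LandauO(2^\lambda/\sqrt\lambda)$.
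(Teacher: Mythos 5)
Your reduction is exactly the one the paper uses: fix a grid of $\LandauO_\varepsilon(1)$ points $u_k$ with $|u_k|\leq R\sqrt{\lambda}$, control the sup over $u$ by monotonicity of $\Theta_2$ and $\Phi$, and handle each fixed $u$ by Chebyshev applied to a second-moment bound of order $2^\lambda/\sqrt{\lambda}$; the identity $\nu_2\binom nT=s_2(T)+s_2(n-T)-s_2(n)$ that you use to recentre at $\lambda-s_2(n)$ is what the paper's shifted family $\tilde\vartheta$ encodes, and your ``trivariate generating function for $(n,T_1,T_2)$'' is, in substance, the paper's system $A,B,C$ with $\mathfrak M^{(2)}=A/(2^\lambda(1-y)(1-z))$. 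Two remarks. First, a technical wrinkle your normalization introduces: you expand $\sum_n(F_n(u)-\Phi)^2$ with $F_n=\Theta_2(\cdot,n)/(n+1)$, so your moment sums carry weights $1/(n+1)$ and $1/(n+1)^2$, which vary by a factor of $2$ across $I_\lambda$ and do not pass through the digit recurrence. The paper sidesteps this by bounding $\sum_n\bigl(\widetilde\Theta(\lambda+u,n)-\Phi(u/\sqrt{\lambda})\,n\bigr)^2$ instead, which expands into the four \emph{unweighted} sums $\sum\widetilde\Theta^2$, $\sum n\widetilde\Theta$, $\sum\widetilde\Theta$, $\sum n^2$; note in particular that this forces a \emph{mixed} moment $\E(n\widetilde\Theta)$ (Lemma~\ref{lem:asymixedmom}) that your expansion hides inside the weight. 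You should restate your target as $|\Theta_2-\Phi\cdot(n+1)|\geq\varepsilon 2^\lambda$ and bound the unweighted square sum; as written, the weighted moment identities you assert are not directly accessible to the generating-function machinery you invoke.

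Second, and more importantly, the reduction is the easy half: the entire technical content of the paper is the derivation of the explicit rational generating functions \eqref{eqn:frakM_identity}, \eqref{eqn:frakMprime_identity}, \eqref{eqn:M2_identity} from the recurrence \eqref{eqn:tilde_theta_rec3}, and the saddle-point extraction of $\Phi(u/\sqrt{\lambda})$ and $\Phi(u/\sqrt{\lambda})^2$ with remainder $\LandauO(1/\sqrt{\lambda})$ uniformly for $|u|\leq R\sqrt{\lambda}$ (Lemmata~\ref{lem:asyfirstmom}--\ref{lem:asysecondmom}). Your proposal names the right tools (Drmota 1994; Drmota--Kauers--Spiegelhofer 2016) and correctly locates the difficulty in the decorrelation of $T_1$ and $T_2$ through the shared digits of $n$, but it does not set up the trivariate function, does not exhibit the cancellation $\E(\widetilde\Theta^2)-2\Phi\E(n\widetilde\Theta)+\Phi^2\E(n^2)=\LandauO(4^\lambda/\sqrt{\lambda})$, and explicitly defers the analysis (``once it is completed\dots''). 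So the architecture matches the paper, but the statement is not proved until that central lemma is carried out.
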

Informally, for most $n\in I_\lambda$ the distribution function defined by $k\mapsto \Theta(k,n)/(n+1)$ follows a normal distribution with expected value $\lambda-s_2(n)$ and variance $\lambda$.
\begin{remark}
Let $n$ be a nonnegative integer and define the expected value
\begin{equation}\label{eqn:average_divisibility}
\mu_n=\frac 1{n+1}\sum_{k\geq 0}k\vartheta(k,n).
\end{equation}

We have
\begin{align*}
\mu_n&=
\frac 1{n+1}\sum_{k\geq 0}\sum_{0\leq t\leq n}k
\lbracket \nu_2\binom nt=k\rbracket
=
\frac 1{n+1}\sum_{0\leq t\leq n}\nu_2\binom nt
\\&=
\frac 2{n+1}\sum_{0\leq t\leq n}s_2(t)-s_2(n)
\end{align*}
by the identity
$\nu_2\binom{n}{t}=s_2(n-t)+s_2(t)-s_2(n)$ we noted before.
Here $\lbracket \,S\,\rbracket$ denotes the Iverson bracket which is $1$ if the statement $S$ is true, and $0$ otherwise.
Using Delange~\cite{D1975}, we obtain the representation
\[\mu_n=\log(n+1)/\log 2-s_2(n)+F\bigl(\log (n+1)/\log 2\bigr),\]
where $F$ is a continuous function of period $1$.
If $2^\lambda\leq n<2^{\lambda+1}$, we have therefore
$\mu_n=\lambda-s_2(n)+\LandauO(1)$,
which is consistent with Theorem~\ref{thm_main}.
\end{remark}
\begin{remark}
Due to the recurrence relation underlying the values $\vartheta_2(j,n)$ (see Section~\ref{sec_rec}) the intervals 
$I_\lambda=\left[2^\lambda,2^{\lambda+1}\right)$ 
are the easiest to work with. 
However, we can extend our result to intervals $[0,N)$ 
by concatenating intervals $I_\lambda$:
we obtain
\begin{equation*}
\left \lvert\left\{n<N
:\sup_{u\in\mathbb R}
\left \lvert \frac{\Theta(\lfloor \log_2 n\rfloor-s_2(n)+u,n)}{n+1}-\Phi\left(\frac u{\sqrt{\log_2 n}}\right)\right \rvert\geq \varepsilon\right\}\right \rvert
=\LandauO\left(\frac N{\sqrt{\log N}}\right),
\end{equation*}
where $\log_2 n=\log n/\log 2$.
We skip the details of the proof.
\end{remark}
\textit{Idea of the proof of Theorem~\ref{thm_main}.}
The essential idea is to show that, for given $u$ and $\varepsilon>0$,
 $\Theta(\lambda-s_2(n)+u,n)/(n+1)$ is $\varepsilon$-close to
$\Phi(u/\sqrt{\lambda})$ for all but few      
$n\in[2^\lambda,2^{\lambda+1})$. 
If this is achieved, we can perform this approximation for all $u\in K$, where
$K$ is evenly spaced in $[-R\sqrt{\lambda},R\sqrt{\lambda}]$ and $\lvert K\rvert\rightarrow\infty$ as $\lambda\rightarrow\infty$, synchronously. 
By monotonicity of the functions involved, we obtain uniformity of the approximation for all $u\in[-R\sqrt{\lambda},R\sqrt{\lambda}]$. Choosing $R$ large enough, we obtain a uniform estimate for all $u\in\mathbb R$ as stated in the theorem.

In order to prove the needed closeness property, we consider the random variable 
$n\mapsto \Theta(\lambda-s_2(n)+u,n)-\Phi(u/\sqrt{\lambda})n$.
By bounding the second moment, using a procedure similar to the method used by Drmota, Kauers and Spiegelhofer~\cite{DKS2016} (see also~\cite{SW2017}), we obtain an upper bound of the difference for all but few $n$.

This ``orthogonal'' approach enables us to prove a statement on the distribution of $j\mapsto \Theta(j,n)$ for most $n$ by studying the random variable $n\mapsto \Theta(j,n)$ on  
$\bigl[2^\lambda,2^{\lambda+1}\bigr)$.  

\section{Proof of the main theorem}
\subsection{Reduction of the main theorem}
Assume that $\lambda,k\geq 0$ and define the random variable
\begin{align}
	\label{eq:Xlkdefspecific}
	X_{\lambda,k}: n\mapsto \widetilde\Theta(k,n)-\Phi\left(\frac {k-\lambda}{\sqrt{\lambda}}\right)n
\end{align}
on $\bigl\{2^\lambda,2^\lambda+1,\ldots,2^{\lambda+1}-1\bigr\}$,
where $\Phi$ is the normal distribution function.
We will also use $X_{\lambda,x}$ for real values of $x$.
The central statement of this paper is contained in the following proposition, from which we will derive the main theorem.
\begin{proposition}\label{prp_closeness}
Let $R>0$ be a real number.
There exists a constant $C$ such that
\[\E (X_{\lambda,\lambda+u}^2) = \frac 1{2^\lambda}\sum_{2^\lambda\leq n<2^{\lambda+1}}X_{\lambda,\lambda+u}^2\leq C\frac{4^\lambda}{\sqrt{\lambda}}\]
for all $u$ and $\lambda$ such that $\lvert u\rvert\leq R\sqrt{\lambda}$.
In particular, for all $\varepsilon>0$ and $\lvert u\rvert\leq R\sqrt{\lambda}$ we have
\begin{equation}\label{eqn_closeness}
\left \lvert\left\{ n\in\bigl[2^\lambda,2^{\lambda+1}\bigr):
\left \lvert \widetilde\Theta(\lambda+u,n)-\Phi\left(\frac{u}{\sqrt{\lambda}}\right)n\right \rvert
\geq 2^\lambda\varepsilon\right\}\right\rvert
\leq 2^\lambda\frac C{\sqrt{\lambda} \varepsilon^2}.
\end{equation}
\end{proposition}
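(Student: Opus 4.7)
The plan has two parts. The passage from the second-moment bound to \eqref{eqn_closeness} is immediate by Markov's inequality: if $|X_{\lambda,\lambda+u}(n)|\ge 2^\lambda\varepsilon$ then $X_{\lambda,\lambda+u}(n)^2\ge 4^\lambda\varepsilon^2$, so the number of exceptional $n$ is at most
\[
\frac{1}{4^\lambda\varepsilon^2}\sum_{n\in I_\lambda}X_{\lambda,\lambda+u}^2
=\frac{2^\lambda\,\E(X_{\lambda,\lambda+u}^2)}{4^\lambda\varepsilon^2}
\le\frac{2^\lambda C}{\sqrt\lambda\,\varepsilon^2}.
\]
Hence all the work lies in the second-moment estimate.

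To prove the second-moment estimate I would expand the square, writing
\[
\sum_{n\in I_\lambda}X_{\lambda,\lambda+u}^2
=S_1-2\Phi\!\left(\tfrac{u}{\sqrt\lambda}\right)S_2+\Phi\!\left(\tfrac{u}{\sqrt\lambda}\right)^{\!2}\!S_3,
\]
where $S_1=\sum_n\widetilde\Theta(\lambda+u,n)^2$, $S_2=\sum_n n\,\widetilde\Theta(\lambda+u,n)$ and $S_3=\sum_n n^2=\tfrac73\,8^\lambda+\LandauO(4^\lambda)$. The target asymptotics, uniform for $|u|\le R\sqrt\lambda$, are
\[
S_2=\Phi\!\left(\tfrac{u}{\sqrt\lambda}\right)S_3+\LandauO\!\left(\tfrac{8^\lambda}{\sqrt\lambda}\right),\qquad
S_1=\Phi\!\left(\tfrac{u}{\sqrt\lambda}\right)^{\!2}S_3+\LandauO\!\left(\tfrac{8^\lambda}{\sqrt\lambda}\right);
\]
substituting produces a cancellation of the $8^\lambda$ terms, and dividing by $2^\lambda$ yields $\E(X_{\lambda,\lambda+u}^2)\le C\,4^\lambda/\sqrt\lambda$.

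The asymptotics of $S_1$ and $S_2$ themselves I would obtain via the generating-function machinery of~\cite{DKS2016}. Starting from the recursion for $\vartheta_2(k,n)$ announced in Section~\ref{sec_rec}, I would analyse the bivariate series $F(z,y)=\sum_{n,k}\vartheta_2(k,n)\,z^n y^k$ (for $S_2$) and the trivariate ``diagonal square'' $G(z,y_1,y_2)=\sum_{n,k_1,k_2}\vartheta_2(k_1,n)\vartheta_2(k_2,n)\,z^n y_1^{k_1}y_2^{k_2}$ (for $S_1$) by saddle-point methods near the dominant singularity $z=1/2$; the resulting Gaussian local limit law on the scale $\sqrt\lambda$, summed from $-\infty$ to $u$, turns the Gaussian density into the distribution function $\Phi(u/\sqrt\lambda)$. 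The principal obstacle is the analysis of $G$: one needs a \emph{uniform} two-dimensional saddle-point estimate throughout the Gaussian window $|u|\le R\sqrt\lambda$, and extracting the $1/\sqrt\lambda$ saving in the error term (rather than the trivial order $8^\lambda$) depends on a subtle cancellation expressing that the two marking variables $y_1,y_2$ decouple in leading order. Controlling this cancellation uniformly in $u$ is the technical heart of the proof.
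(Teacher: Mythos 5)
Your reduction is exactly the paper's. The Markov/Chebyshev step is the paper's derivation of \eqref{eqn_closeness} verbatim, and the expansion of $\E(X_{\lambda,\lambda+u}^2)$ into the three sums $S_1,S_2,S_3$ together with the cancellation $S_1-2\Phi S_2+\Phi^2S_3=\LandauO(8^\lambda/\sqrt{\lambda})$ is precisely the ansatz used in the paper's final subsection (the paper carries an extra affine term $w_{\lambda,k}$, but that is a cosmetic generalization). Your target asymptotics for $S_2$ and $S_1$ are also the correct ones: they are Lemmata~\ref{lem:asymixedmom} and~\ref{lem:asysecondmom}, with Lemma~\ref{lem:asyfirstmom} needed as well, and uniformity for $\lvert u\rvert\leq R\sqrt{\lambda}$ is exactly what makes the error terms $\LandauO(u^2/\lambda^{3/2})$ acceptable.

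The gap is in the route you propose to those asymptotics. The series $F(z,y)=\sum_{n,k}\vartheta_2(k,n)z^ny^k$, with $z$ marking $n$ itself, is not amenable to a saddle-point analysis at a ``dominant singularity $z=1/2$'': since $\sum_k\vartheta_2(k,n)=n+1$ one has $F(z,1)=(1-z)^{-2}$, and the divide-and-conquer recurrence~\eqref{eqn:tilde_theta_rec3} turns $F$ (and your trivariate $G$) into solutions of Mahler-type functional equations relating $z$ to $z^2$, not into rational functions; extracting a local limit law with a uniform $1/\sqrt{\lambda}$-saving from such equations is a different and substantially harder problem than the one treated in~\cite{D1994,DKS2016}. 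The paper avoids this by first averaging over the dyadic block $I_\lambda$ and letting a formal variable $x$ mark the block index $\lambda$: the recurrence in $n$ then becomes a linear recurrence in $\lambda$ with explicit boundary corrections coming from $\tilde\vartheta(k,2^\lambda-1)=2^\lambda\delta_{k,\lambda}$, so that $M$, $M'$ and the trivariate $A$ are honest rational functions, and the asymptotics follow from partial fractions in $x$ plus a one-dimensional (resp.\ two-dimensional) saddle point in $y$ (resp.\ $(y,z)$), followed by the partial summation that produces $\Phi$. You have correctly located the technical heart --- the uniform two-dimensional saddle point and the leading-order decoupling of the two marking variables --- but without the dyadic averaging your generating functions are not of the form to which that machinery applies, so as written the plan for $S_1$ and $S_2$ does not go through.
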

The second part of this proposition can be derived as follows:
We let $M$ denote the left hand side of~\eqref{eqn_closeness}.
Then we have
\[
C\frac{4^\lambda}{\sqrt{\lambda}}
\geq
\frac 1{2^\lambda}\sum_{2^\lambda\leq n<2^{\lambda+1}}X_{\lambda,\lambda+u}^2
\geq \frac 1{2^\lambda}M(2^\lambda\varepsilon)^2.\]
Note that this is similar to an application of Chebyshev's inequality. 

We wish to derive Theorem~\ref{thm_main} from this proposition.
Equation~\eqref{eqn_closeness} states that $\widetilde\Theta(\lambda+u,n)$ is ``usually'' close to a Gaussian distribution. We make this more precise in the following.
We consider the quantity $X_{\lambda,\lambda+u}$ at $N$ many points, where
$N$ is chosen later.
Set \[U=\bigl\{\left(-1+2n/N\right)R\sqrt{\lambda}:0\leq n\leq N\bigr\}.\]
Clearly, we have  
\begin{equation}\label{eqn_pivot_points}
\left\lvert\left\{n\in \bigl[2^\lambda,2^{\lambda+1}\bigr):\lvert X_{\lambda,\lambda+u}\rvert\geq 2^\lambda\varepsilon\text{ for some }u\in U\right\}\right\rvert\leq C(N+1)2^\lambda\frac 1{\sqrt{\lambda}\varepsilon^2}.
\end{equation}
Let $u_1$ and $u_2$ be adjacent elements of $U$ and assume that $u_1\leq u\leq u_2$.
By the triangle inequality, the mean value theorem and monotonicity of $\widetilde\Theta$ and $\Phi$ applied to~\eqref{eq:Xlkdefspecific} we have
\begin{align*}
\lvert X_{\lambda,\lambda+u}\rvert&\leq
\min\bigl(\lvert X_{\lambda,\lambda+u_1}\rvert,
\lvert X_{\lambda,\lambda+u_2}\rvert\bigr)
+n\bigl(\Phi(u_2/\sqrt{\lambda})-\Phi(u_1/\sqrt{\lambda})\bigr)
\\&\leq
\min\bigl(\lvert X_{\lambda,\lambda+u_1}\rvert,
\lvert X_{\lambda,\lambda+u_2}\rvert\bigr)
+2^\lambda\frac{4R}{N}.
\end{align*}

Using this inequality and~\eqref{eqn_pivot_points} it follows that
\begin{align*}
&\bigl \lvert\bigl\{n\in \bigl[2^\lambda,2^{\lambda+1}\bigr):\lvert X_{\lambda,\lambda+u}\rvert\geq 2^\lambda\bigl(\delta+4R/N\bigr)\text{ for some }u, \lvert u\rvert\leq R\sqrt{\lambda}\bigr\}\bigr \rvert
\\&\leq
\bigl \lvert\bigl\{n\in \bigl[2^\lambda,2^{\lambda+1}\bigr):
\min\bigl(\lvert X_{\lambda,\lambda+u_1}\rvert,\lvert X_{\lambda,\lambda+u_2}\rvert\bigr)\geq 2^\lambda\delta\text{ for some }u_1,u_2\in U\bigr\}\bigr \rvert
\\&=
\bigl \lvert\bigl\{n\in \bigl[2^\lambda,2^{\lambda+1}\bigr):
\lvert X_{\lambda,\lambda+u}\rvert\geq 2^\lambda\delta\text{ for some }u\in U\bigr\}\bigr \rvert
\\&\leq C(N+1)2^\lambda\frac 1{\sqrt{\lambda}\delta^2}.
\end{align*}
For given $\varepsilon>0$, we choose $\delta=\varepsilon/10$ and $N=\lceil 10R/\varepsilon\rceil$.
This implies (we also replace $n$ by $n+1$, introducing a small error which is accounted for by the error term)
\begin{equation}\label{eqn_penultimate}
\left \lvert\left\{n\in I_\lambda:
\left \lvert
\widetilde\Theta(\lambda+u,n)-\Phi\left(\frac u{\sqrt{\lambda}}\right)(n+1)\right \rvert\geq 2^\lambda\frac{\varepsilon}2\text{ for some }\lvert u\rvert\leq R\sqrt{\lambda}\right\}\right \rvert
=\LandauO\left(\frac {2^\lambda}{\sqrt{\lambda}}\right)
\end{equation}
for some implied constant depending on $\varepsilon$ and $R$.

Let $\varepsilon$ be given and choose $R$ in such a way that $\Phi(u/\sqrt{\lambda})\leq \varepsilon/2$ for $u\leq -R\sqrt{\lambda}$ 
(note that also $\Phi(u/\sqrt{\lambda})\geq 1-\varepsilon/2$ for $u\geq R\sqrt{\lambda}$).
Assume that $n\in I_\lambda$ is such that for all $\lvert u\rvert \leq R\sqrt{\lambda}$
\[
\left\lvert \frac{\widetilde\Theta(\lambda+u,n)}{n+1}-\Phi\left(\frac u{\sqrt{\lambda}}\right)\right\rvert< \frac \varepsilon 2.
\]

Then by monotonicity (remember that $\Theta(k,n)$ are partial sums) we have $\widetilde\Theta(\lambda+u,n)/(n+1)\in[0,\varepsilon]$, $\Phi(u/\sqrt{\lambda})\in[0,\varepsilon]$ for $u\leq -R\sqrt{\lambda}$
and likewise
$\widetilde\Theta(\lambda+u,n)/(n+1)\in[1-\varepsilon,1]$, $\Phi(u/\sqrt{\lambda})\in[1-\varepsilon,1]$ for $u\geq R\sqrt{\lambda}$, therefore
\[
\left\lvert \frac{\widetilde\Theta(\lambda+u,n)}{n+1}-\Phi\left(\frac u{\sqrt{\lambda}}\right)\right\rvert<\varepsilon
\]
for all real $u$.
Using also equation~\eqref{eqn_penultimate}
we obtain
\begin{align*}
&\left \lvert\left\{n\in I_\lambda:
\left \lvert
\frac{\widetilde\Theta(\lambda+u,n)}{n+1}-\Phi\left(\frac u{\sqrt{\lambda}}\right)\right \rvert\geq \varepsilon \text{ for some } u\in\mathbb R\right\}\right \rvert
\\&\leq
\left \lvert\left\{n\in I_\lambda:
\left \lvert
\frac{\widetilde\Theta(\lambda+u,n)}{n+1}-\Phi\left(\frac u{\sqrt{\lambda}}\right)\right \rvert\geq \frac{\varepsilon}2\text{ for some }\lvert u\rvert\leq R\sqrt{\lambda}\right\}\right \rvert
\\&\leq
\left \lvert\left\{n\in I_\lambda:
\left \lvert
\widetilde\Theta(\lambda+u,n)-\Phi\left(\frac u{\sqrt{\lambda}}\right)(n+1)\right \rvert\geq 2^\lambda\frac{\varepsilon}2\text{ for some }\lvert u\rvert\leq R\sqrt{\lambda}\right\}\right \rvert
\\&=\LandauO\left(\frac {2^\lambda}{\sqrt{\lambda}}\right)
\end{align*}
and the proof of the second part is complete.

It remains to prove the first part of Proposition~\ref{prp_closeness}. Motivated by Chebyshev's inequality, the main idea in its proof is to show that the the random variable $X_{\lambda,k}$ possesses a small second moment.
\subsection{A recurrence relation for the values $\vartheta_p(j,n)$}\label{sec_rec}
Carlitz~\cite{C1967} found a recurrence for the values $\vartheta_p(j,n)$,
involving a second family $\psi_p$ of values.
We will be working with a shifted and rarefied family $\tilde\vartheta_p$, which satisfies a simpler recurrence relation
(compare the paper by the authors~\cite[Section 2.3]{SW2016}):
define, for $k,n\geq 0$,
\begin{align}\label{eqn:def_tilde_theta}
\tilde\vartheta_p(k,n) &=
\begin{cases}
  \vartheta_p\Bigl(\frac{k-s_p(n)}{p-1},n\Bigr), & k\geq s_p(n)\text{ and }p-1\mid k-s_p(n);\\
  0,                     & \text{otherwise}.
\end{cases}
\end{align}
\def\extrarowheight{1ex}             
\newcommand{\colwidth}{1em}
\newcommand{\SetWidth}[1]{\makebox[\widthof{\colwidth}]{$#1$}}%
\begin{table}[h!]
\[
\!\!\!
\begin{array}{c|c@{}c@{}c@{}c@{}c@{}c@{}c@{}c@{}c@{}c@{}c@{}c@{}c@{}c@{}c@{}c@{}c@{}c}
&\SetWidth{0}&\SetWidth{1}&\SetWidth{2}&\SetWidth{3}&\SetWidth{4}&\SetWidth{5}&\SetWidth{6}&\SetWidth{7}&\SetWidth{8}&\SetWidth{9}&\SetWidth{10}&\SetWidth{11}&\SetWidth{12}&\SetWidth{13}&\SetWidth{14}&\SetWidth{15}&\SetWidth{16}&\SetWidth{17}\\
\hline
 0& 1&  &  &  &  &  &  &  &  &  &  &  &  &  &  &  &  &\\
 1&  & 2& 2&  & 2&  &  &  & 2&  &  &  &  &  &  &  & 2&\\
 2&  &  & 1& 4& 1& 4& 4&  & 1& 4& 4&  & 4&  &  &  & 1& 4\\
 3&  &  &  &  & 2& 2& 2& 8& 2& 2& 4& 8& 2& 8& 8&  & 2& 2\\
 4&  &  &  &  &  &  & 1&  & 4& 4& 1& 4& 5& 4& 4&16& 4& 4\\
 5&  &  &  &  &  &  &  &  &  &  & 2&  & 2& 2& 2&  & 8& 8\\
 6&  &  &  &  &  &  &  &  &  &  &  &  &  &  & 1&  &  &
\end{array}
\]
\caption{Some coefficients of $\tilde\vartheta_2(k,n)$. The first variable corresponds to the row number.}
\label{tbl:tilde_theta_values2}
\end{table}
\def\extrarowheight{0ex}             
Setting for simplicity $\tilde\vartheta_p(k,n)=0$ if $k<0$ or $n<0$,
we obtain the following recurrence relation for $k,n\geq 0$.
\begin{equation}\label{eqn:tilde_theta_rec1}
\begin{aligned}
\tilde\vartheta_p(0,n)&=\delta_{0,n},&\text{for }n\geq 0;\\
\tilde\vartheta_p(k,0)&=\delta_{k,0},&\text{for }k\geq 0,\end{aligned}
\end{equation}
and for $n\geq 0$ and $0\leq a<p$,
\begin{equation}\label{eqn:tilde_theta_rec2}
\tilde\vartheta_p(k,pn+a)=(a+1)\tilde\vartheta_p(k-a,n)+(p-a-1)\tilde\vartheta_p(k-p-a,n-1).
\end{equation}

For the rest of the paper, we will restrict ourselves to the case $p=2$.
We will therefore usually omit the subscript $2$.
Assume that $\lambda,k\geq 0$.
Then the above recurrence reads
\begin{equation}\label{eqn:tilde_theta_rec3}
\begin{aligned}
\tilde\vartheta(0,n)&=\delta_{0,n},&\text{for }n\geq 0;\\
\tilde\vartheta(k,0)&=\delta_{k,0},&\text{for }k\geq 0;\\
\tilde\vartheta(k,2n)&=\tilde\vartheta(k,n)+\tilde\vartheta_2(k-2,n-1),\\
\tilde\vartheta(k,2n+1)&=2\tilde\vartheta(k-1,n).
\end{aligned}
\end{equation}

Note that
\begin{equation}\label{eqn:vartheta_Kronecker}
\tilde\vartheta(k,2^\lambda-1)=2^\lambda\delta_{k,\lambda}.
\end{equation}
\subsection{OGFs for the moments}
We are interested in the quantity
\begin{align}
\label{eqn:Theta}
\widetilde\Theta(j,n)=\sum_{i\leq j}\tilde\vartheta(i,n).
\end{align}
For most $n$ these values should follow a normal distribution with expectation $\lambda$ and variance $\lambda$.

In order to compute the second moment of $X_{\lambda,k}$, we will have to treat the following quantities.
We define
\begin{align}
m_{\lambda,k}&\coloneqq
\frac 1{2^\lambda}
\sum_{2^\lambda\leq n<2^{\lambda+1}}
\tilde\vartheta(k,n),&
\mathfrak m_{\lambda,k}&\coloneqq
\frac 1{2^\lambda}
\sum_{2^\lambda\leq n<2^{\lambda+1}}
\widetilde\Theta(k,n),&
\\m'_{\lambda,k}&\coloneqq
\frac 1{2^\lambda}
\sum_{2^\lambda\leq n<2^{\lambda+1}}
n\,\tilde\vartheta(k,n),&
\mathfrak m'_{\lambda,k}&\coloneqq
\frac 1{2^\lambda}
\sum_{2^\lambda\leq n<2^{\lambda+1}}
n\,\widetilde\Theta(k,n),\\
m^{(2)}_{\lambda,k}&\coloneqq
\frac 1{2^\lambda}
\sum_{2^\lambda\leq n<2^{\lambda+1}}
\tilde\vartheta(k,n)^2&
\mathfrak m^{(2)}_{\lambda,k}&\coloneqq
\frac 1{2^\lambda}
\sum_{2^\lambda\leq n<2^{\lambda+1}}
\widetilde\Theta(k,n)^2
\end{align}
and the corresponding generating functions
\begin{align*}
M(x,y)&=\sum_{\lambda,k\geq 0}m_{\lambda,k}x^\lambda y^k,&
\mathfrak M(x,y)&=\sum_{\lambda,k\geq 0}\mathfrak m_{\lambda,k}x^\lambda y^k,&
\\
M'(x,y)&=\sum_{\lambda,k\geq 0}m'_{\lambda,k}x^\lambda y^k,&
\mathfrak M'(x,y)&=\sum_{\lambda,k\geq 0}\mathfrak m'_{\lambda,k}x^\lambda y^k,
\\
M^{(2)}(x,y)&=\sum_{\lambda,k\geq 0}m^{(2)}_{\lambda,k}x^\lambda y^k,&
\mathfrak M^{(2)}(x,y)&=\sum_{\lambda,k\geq 0}\mathfrak m^{(2)}_{\lambda,k}x^\lambda y^k.
\end{align*}
We begin with the easier cases, concerning $m_{\lambda,k}$ and $m'_{\lambda,k}$.
We have $m_{0,k}=2\delta_{k,1}$ and for $k\geq 0$ and $\lambda\geq 1$ we obtain by splitting into even and odd integers,
applying the recurrence~\eqref{eqn:tilde_theta_rec3} and the identity $\tilde\vartheta(k,2^\lambda-1)=2^\lambda\delta_{k,\lambda}$,
\begin{equation}\label{eqn:m_rec_one_step}\nonumber
\begin{aligned}
m_{\lambda,k}
&=\frac 1{2^\lambda}\sum_{2^{\lambda-1}\leq n<2^\lambda}\tilde\vartheta(k,2n)
+\frac 1{2^\lambda}\sum_{2^{\lambda-1}\leq n<2^\lambda}\tilde\vartheta(k,2n+1)\\
&=\frac 12\frac 1{2^{\lambda-1}}\sum_{2^{\lambda-1}\leq n<2^\lambda}\tilde\vartheta(k,n)
+\frac 12\frac 1{2^{\lambda-1}}\sum_{2^{\lambda-1}\leq n<2^\lambda}\tilde\vartheta(k-2,n-1)\\
&+\frac 1{2^{\lambda-1}}\sum_{2^{\lambda-1}\leq n<2^\lambda}\tilde\vartheta(k-1,n)
\\&=\frac 12\bigl(m_{\lambda-1,k-2}+2m_{\lambda-1,k-1}+m_{\lambda-1,k}\bigr)
+\frac 1{2^\lambda}\tilde\vartheta(k-2,2^{\lambda-1}-1)
-\frac 1{2^\lambda}\tilde\vartheta(k-2,2^{\lambda}-1)\\
\\&=\frac 12\bigl(m_{\lambda-1,k-2}+2m_{\lambda-1,k-1}+m_{\lambda-1,k}\bigr)
+\frac 12\delta_{\lambda,k-1}-\delta_{\lambda,k-2}.
\end{aligned}
\end{equation}
For convenience, as we noted above, we set $\tilde\vartheta(k,n)=0$ if $k<0$ or $n<0$.
We obtain
\begin{align*}
M(x,y)&=\sum_{\lambda,k\geq 0}m_{\lambda,k}x^\lambda y^k
=\sum_{k\geq 0}m_{0,k}y^k
+
\sum_{\substack{\lambda\geq 1\\k\geq 0}}m_{\lambda,k}x^\lambda y^k\\
&=2y
+
\sum_{\substack{\lambda\geq 1\\k\geq 0}}
\Bigl(
\frac 12 m_{\lambda-1,k-2}+m_{\lambda-1,k-1}+\frac 12 m_{\lambda-1,k}
+\frac 12\delta_{\lambda,k-1}-\delta_{\lambda,k-2}
\Bigr)x^\lambda y^k\\
&=2y+\biggl(\frac {xy^2}2+xy+\frac x2\biggr)M(x,y)+
\frac 12\sum_{\lambda\geq 1}x^\lambda y^{\lambda+1}
-\sum_{\lambda\geq 1}x^\lambda y^{\lambda+2}\\
&=2y+\frac x2(1+y)^2M(x,y)+\frac y2\biggl(\frac 1{1-xy}-1\biggr)
-y^2\biggl(\frac 1{1-xy}-1\biggr)\\
&=2y+\frac x2(1+y)^2M(x,y)+
\frac 12\frac {xy^2}{1-xy}\bigl(1-2y\bigr),
\end{align*}
therefore
\begin{equation}\label{eqn:M_identity}
M(x,y)=\frac{2y+\frac 12\frac{xy^2}{1-xy}(1-2y)}{1-\frac x2(1+y)^2}
=\frac y2\frac{4-3xy-2xy^2}{(1-xy)\bigl(1-\frac x2(1+y)^2\bigr)}.
\end{equation}
We will use this identity in a moment in the treatment of the values $m'_{\lambda,k}$.
Moreover, we clearly have
\begin{equation}\label{eqn:frakM_identity}
\mathfrak M(x,y)=\frac 1{1-y}\frac y2\frac{4-3xy-2xy^2}{\bigl(1-xy\bigr)\bigl(1-\frac x2(1+y)^2\bigr)}.
\end{equation}
For $\lambda\geq 1$ and $k\geq 0$ we have
\begin{align*}
m'_{\lambda,k}
&=\frac 1{2^\lambda}\sum_{2^{\lambda-1}\leq n<2^\lambda}2n\tilde\vartheta(k,2n)
+\frac 1{2^\lambda}\sum_{2^{\lambda-1}\leq n<2^\lambda}(2n+1)\tilde\vartheta(k,2n+1)\\
&=\frac 1{2^{\lambda-1}}\sum_{2^{\lambda-1}\leq n<2^\lambda}n\tilde\vartheta(k,n)
+\frac 1{2^{\lambda-1}}\sum_{2^{\lambda-1}-1\leq n<2^\lambda-1}(n+1)\tilde\vartheta(k-2,n)\\
&+\frac 2{2^{\lambda-1}}\sum_{2^{\lambda-1}\leq n<2^\lambda}n\tilde\vartheta(k-1,n)+\frac 1{2^{\lambda-1}}\sum_{2^{\lambda-1}\leq n<2^\lambda}\tilde\vartheta(k-1,n)
\\&=m'_{\lambda-1,k}+m'_{\lambda-1,k-2}
+\frac 1{2^{\lambda-1}}\bigl(2^{\lambda-1}-1\bigr)\tilde\vartheta\bigl(k-2,2^{\lambda-1}-1\bigr)\\
&-\frac 1{2^{\lambda-1}}\bigl(2^{\lambda}-1\bigr)\tilde\vartheta\bigl(k-2,2^{\lambda}-1\bigr)+m_{\lambda-1,k-2}
+\frac 1{2^{\lambda-1}}\tilde\vartheta\bigl(k-2,2^{\lambda-1}-1\bigr)\\
&-\frac 1{2^{\lambda-1}}\tilde\vartheta\bigl(k-2,2^{\lambda}-1\bigr)
+2m'_{\lambda-1,k-1}+m_{\lambda-1,k-1}
\\&=m'_{\lambda-1,k}+2m'_{\lambda-1,k-1}+m'_{\lambda-1,k-2}+m_{\lambda-1,k-1}+m_{\lambda-1,k-2}
\\&+2^{\lambda-1}\delta_{\lambda,k-1}
-2^{\lambda+1}\delta_{\lambda,k-2}
\end{align*}
We obtain
\begin{align*}
M'(x,y)&=\sum_{\lambda,k\geq 0}m'_{\lambda,k}x^\lambda y^k
=\sum_{k\geq 0}m'_{0,k}y^k
+
\sum_{\substack{\lambda\geq 1\\k\geq 0}}m'_{\lambda,k}x^\lambda y^k\\
&=
2y
+
\sum_{\substack{\lambda\geq 1\\k\geq 0}}
\Bigl(m'_{\lambda-1,k}+2m'_{\lambda-1,k-1}+m'_{\lambda-1,k-2}\\&+m_{\lambda-1,k-1}+m_{\lambda-1,k-2}
+2^{\lambda-1}\delta_{\lambda,k-1}
-2^{\lambda+1}\delta_{\lambda,k-2}
\Bigr)x^\lambda y^k\\
&=2y+\bigl(x+2xy+xy^2\bigr)M'(x,y)
+xy(1+y)M(x,y)\\
&+\sum_{\lambda\geq 0}2^\lambda x^{\lambda+1} y^{\lambda+2}
-\sum_{\lambda\geq 0}2^{\lambda+2}x^{\lambda+1} y^{\lambda+3}\\
&=2y+x(1+y)^2M'(x,y)+xy(1+y)M(x,y)
+\frac {xy^2(1-4y)}{1-2xy}.
\end{align*}
Inserting the formula for $M$, we obtain
\begin{equation}\label{eqn:Mprime_identity}
M'(x,y)=
\frac 1{1-x(1+y)^2}
\biggl(
2y+
\frac {xy^2(1+y)}2\frac{4-3xy-2xy^2}{(1-xy)\bigl(1-\frac x2(1+y)^2\bigr)}
+\frac {xy^2(1-4y)}{1-2xy}
\biggr).
\end{equation}
and
\begin{equation}\label{eqn:frakMprime_identity}
\mathfrak M'(x,y)=
\frac 1{\bigl(1-y\bigr)\bigl(1-x(1+y)^2\bigr)}
\biggl(
2y+
\frac {xy^2(1+y)}2\frac{4-3xy-2xy^2}{(1-xy)\bigl(1-\frac x2(1+y)^2\bigr)}
+\frac {xy^2(1-4y)}{1-2xy}
\biggr).
\end{equation}
Note that the denominator of $M'$ has a simple structure, therefore it will not be too difficult to analyze the coefficients asymptotically.

Let us proceed to the main term.
We want to extract this term as a diagonal of a trivariate generating function.
We define therefore, for $\lambda,k,\ell\geq 0$,
\begin{equation}\label{eqn:abc_def}
\begin{aligned}
a_{\lambda,k,\ell}&\coloneqq
\sum_{2^\lambda\leq n<2^{\lambda+1}}
\tilde\vartheta(k,n)\tilde\vartheta(\ell,n),\\
b_{\lambda,k,\ell}&\coloneqq
\sum_{2^\lambda\leq n<2^{\lambda+1}}
\tilde\vartheta(k,n)\tilde\vartheta(\ell,n-1),\\
c_{\lambda,k,\ell}&\coloneqq
\sum_{2^\lambda\leq n<2^{\lambda+1}}
\tilde\vartheta(k,n-1)\tilde\vartheta(\ell,n),
\end{aligned}
\end{equation}
where $b$ and $c$ will act as auxiliary variables.
For convenience, we define $a_{\lambda,k,\ell}=b_{\lambda,k,\ell}=c_{\lambda,k,\ell}=0$ for $k<0$ or $\ell<0$.

By splitting into even and odd indices and using~\eqref{eqn:tilde_theta_rec3},
we obtain for $\lambda\geq 1$ and $k,\ell\geq 0$
\begin{align*}
a_{\lambda,k,\ell}&=
\sum_{2^{\lambda-1}\leq n<2^\lambda}
\bigl(\tilde\vartheta(k,n)+\tilde\vartheta(k-2,n-1)\bigr)
\bigl(\tilde\vartheta(\ell,n)+\tilde\vartheta(\ell-2,n-1)\bigr)
\\&+4\sum_{2^{\lambda-1}\leq n<2^\lambda}
\tilde\vartheta(k-1,n)\,\tilde\vartheta(\ell-1,n)\\
&=a_{\lambda-1,k,\ell}+b_{\lambda-1,k,\ell-2}+c_{\lambda-1,k-2,\ell}+a_{\lambda-1,k-2,\ell-2}+4a_{\lambda-1,k-1,\ell-1}\\
&+\tilde\vartheta(k-2,2^{\lambda-1}-1)\,\tilde\vartheta(\ell-2,2^{\lambda-1}-1)
-\tilde\vartheta(k-2,2^{\lambda}-1)\,\tilde\vartheta(\ell-2,2^{\lambda}-1)\\
&=a_{\lambda-1,k,\ell}+b_{\lambda-1,k,\ell-2}+c_{\lambda-1,k-2,\ell}+a_{\lambda-1,k-2,\ell-2}+4a_{\lambda-1,k-1,\ell-1}\\
&+4^{\lambda-1}\delta_{k-1,\lambda}\delta_{\ell-1,\lambda}
-4^\lambda\delta_{k-2,\lambda}\delta_{\ell-2,\lambda},
\end{align*}
\begin{align*}
b_{\lambda,k,\ell}&=
2\sum_{2^{\lambda-1}\leq n<2^\lambda}
\bigl(\tilde\vartheta(k,n)+\tilde\vartheta(k-2,n-1)\bigr)\,\tilde\vartheta(\ell-1,n-1)
\\&+2\sum_{2^{\lambda-1}\leq n<2^\lambda}
\tilde\vartheta(k-1,n)
\,
\bigl(\tilde\vartheta(\ell,n)+\tilde\vartheta(\ell-2,n-1)\bigr)\\
&=2b_{\lambda-1,k,\ell-1}+2a_{\lambda-1,k-2,\ell-1}+2a_{\lambda-1,k-1,\ell}+2b_{\lambda-1,k-1,\ell-2}\\
&+2\tilde\vartheta(k-2,2^{\lambda-1}-1)\tilde\vartheta(\ell-1,2^{\lambda-1}-1)-2\tilde\vartheta(k-2,2^{\lambda}-1)\tilde\vartheta(\ell-1,2^{\lambda}-1)\\
&=2b_{\lambda-1,k,\ell-1}+2a_{\lambda-1,k-2,\ell-1}+2a_{\lambda-1,k-1,\ell}+2b_{\lambda-1,k-1,\ell-2}\\
&+2\cdot 4^{\lambda-1}\delta_{k-1,\lambda}\delta_{\ell,\lambda}
-2\cdot 4^\lambda\delta_{k-2,\lambda}\delta_{\ell-1,\lambda},
\end{align*}
and by exchanging $b$ and $c$ resp. $k$ and $\ell$,
\begin{align*}
c_{\lambda,k,\ell}
&=2c_{\lambda-1,k-1,\ell}+2a_{\lambda-1,k-1,\ell-2}+2a_{\lambda-1,k,\ell-1}+2c_{\lambda-1,k-2,\ell-1}\\
&+2\cdot 4^{\lambda-1}\delta_{k,\lambda}\delta_{\ell-1,\lambda}
-2\cdot 4^\lambda\delta_{k-1,\lambda}\delta_{\ell-2,\lambda}.
\end{align*}
We translate these recurrences into identities for trivariate generating functions.
Set
\begin{align*}
A(x,y,z)&\coloneqq
\sum_{\lambda,k,\ell\geq 0}a_{\lambda,k,\ell}x^\lambda y^k z^\ell,
\\
B(x,y,z)&\coloneqq
\sum_{\lambda,k,\ell\geq 0}b_{\lambda,k,\ell}x^\lambda y^k z^\ell,\\
C(x,y,z)&\coloneqq
\sum_{\lambda,k,\ell\geq 0}c_{\lambda,k,\ell}x^\lambda y^k z^\ell.
\end{align*}
Then
\begin{align*}
A(x,y,z)&=
\sum_{k,\ell\geq 0}
\tilde\vartheta(k,1)\tilde\vartheta(\ell,1)
y^kz^\ell
\\&+\sum_{\substack{\lambda\geq 1\\k,\ell\geq 0}}
\Bigl(a_{\lambda-1,k,\ell}+b_{\lambda-1,k,\ell-2}+c_{\lambda-1,k-2,\ell}+a_{\lambda-1,k-2,\ell-2}+4a_{\lambda-1,k-1,\ell-1}\Bigr)
x^\lambda y^kz^\ell
\\&+\sum_{\substack{\lambda\geq 1\\k,\ell\geq 0}}
4^{\lambda-1}\delta_{k-1,\lambda}\delta_{\ell-1,\lambda}
x^\lambda y^kz^\ell
-
\sum_{\substack{\lambda\geq 1\\k,\ell\geq 0}}
4^\lambda\delta_{k-2,\lambda}\delta_{\ell-2,\lambda}
x^\lambda y^kz^\ell
\\
&=4yz+x\bigl(1+4yz+y^2z^2\bigr)A(x,y,z)
+xz^2B(x,y,z)+xy^2C(x,y,z)\\
&+
\sum_{\lambda\geq 1}4^{\lambda-1}x^\lambda y^{\lambda+1}z^{\lambda+1}
-4\sum_{\lambda\geq 1}4^{\lambda-1}x^\lambda y^{\lambda+2}z^{\lambda+2}\\
&=4yz+x\bigl(1+4yz+y^2z^2\bigr)A(x,y,z)
+xz^2B(x,y,z)+xy^2C(x,y,z)\\
&+
xy^2z^2\frac{1-4yz}{1-4xyz},
\\[\baselineskip]
B(x,y,z)&=
\sum_{k,\ell\geq 0}
\tilde\vartheta(k,1)\tilde\vartheta(\ell,0)
y^kz^\ell
\\&+2\sum_{\substack{\lambda\geq 1\\k,\ell\geq 0}}
\Bigl(
b_{\lambda-1,k,\ell-1}+a_{\lambda-1,k-2,\ell-1}+a_{\lambda-1,k-1,\ell}+b_{\lambda-1,k-1,\ell-2}
\Bigr)
x^\lambda y^kz^\ell
\\&+2\sum_{\substack{\lambda\geq 1\\k,\ell\geq 0}}
4^{\lambda-1}\delta_{k-1,\lambda}\delta_{\ell,\lambda}
x^\lambda y^kz^\ell
-2\sum_{\substack{\lambda\geq 1\\k,\ell\geq 0}}
4^\lambda\delta_{k-2,\lambda}\delta_{\ell-1,\lambda}
x^\lambda y^kz^\ell
\\
&=2y+2xy\bigl(1+yz\bigr)A(x,y,z)
+2xz\bigl(1+yz\bigr)B(x,y,z)\\
&+2\sum_{\lambda\geq 1}4^{\lambda-1}x^\lambda y^{\lambda+1}z^\lambda
-8\sum_{\lambda\geq 1}4^{\lambda-1}x^\lambda y^{\lambda+2}z^{\lambda+1}\\
&=2y+2xy\bigl(1+yz\bigr)A(x,y,z)
+xz\bigl(1+yz\bigr)B(x,y,z)\\
&+2xy^2z
\frac{1-4yz}{1-4xyz},
\\[\baselineskip]
C(x,y,z)
&=2z+2xz\bigl(1+yz\bigr)A(x,y,z)
+2xy\bigl(1+yz\bigr)C(x,y,z)\\
&+2xyz^2
\frac{1-4yz}{1-4xyz}.
\end{align*}
It follows that
\begin{align*}
B(x,y,z)&=\frac{2y+2xy^2z\frac{1-4yz}{1-4xyz}}{1-2xz(1+yz)}
+\frac{2xy(1+yz)}{1-2xz(1+yz)}A(x,y,z),\\
C(x,y,z)&=\frac{2z+2xyz^2\frac{1-4yz}{1-4xyz}}{1-2xy(1+yz)}
+\frac{2xz(1+yz)}{1-2xy(1+yz)}A(x,y,z),
\end{align*}
and therefore we obtain after some rewriting
\begin{align}\label{eqn:M2_identity}
\begin{aligned}
\mathfrak M^{(2)}&=
\frac 1{2^\lambda (1-y)(1-z)}A(x,y,z)\\&=
\frac 1{2^\lambda (1-y)(1-z)}
\frac{4yz
+xz^2\frac{2y+2xy^2z\frac{1-4yz}{1-4xyz}}{1-2xz(1+yz)}
+xy^2\frac{2z+2xyz^2\frac{1-4yz}{1-4xyz}}{1-2xy(1+yz)}
+xy^2z^2\frac{1-4yz}{1-4xyz}}{1-x(1+yz)^2
-\frac{xyz}{1-2xz(1+yz)}
-\frac{xyz}{1-2xy(1+yz)}}.
\end{aligned}
\end{align}
Note that we have the same denominator as in~\cite{DKS2016}.
\subsection{Asymptotic expansion of the first moment}
Recall that we want to compute ${\mathfrak m}_{\lambda,\lambda+u}=[x^\lambda y^{\lambda+u}] \mathfrak M(x,y)$ where the rational function $\mathfrak M(x,y)$ is given in~\eqref{eqn:frakM_identity}. We will adapt the method of~\cite{D1994} which also captures the (Gaussian) fluctuations $n+u$ for $u$ sufficiently small.
\begin{lemma}
	\label{lem:asyfirstmom}
	For $\lambda \to \infty$ we have
	\begin{align*}
		{\mathfrak m}_{\lambda,\lambda+u} &= \frac{3}{2} 2^\lambda \Phi\left(\frac{u}{\sqrt{\lambda}}\right) \left(1 + \LandauO\left(\frac{1}{\sqrt{\lambda}}\right) + \LandauO\left(\frac{u^2}{\sqrt{\lambda^{3} } }\right) \right).
	\end{align*}
\end{lemma}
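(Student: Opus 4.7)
My plan is to imitate the saddle-point / contour integration approach of Drmota~\cite{D1994}, applied to the explicit rational bivariate generating function $\mathfrak M(x,y)$ from~\eqref{eqn:frakM_identity}. The structure of $\mathfrak M$ is convenient because the only relevant singularities near the unit polydisc are the simple zeros of $1-xy$, $1-\tfrac{x}{2}(1+y)^2$, and $1-y$. I would perform the extraction $\mathfrak m_{\lambda,\lambda+u}=[x^\lambda y^{\lambda+u}]\mathfrak M$ in two stages: first $[x^\lambda]$ by residue calculus, then $[y^{\lambda+u}]$ by a saddle-point analysis that handles the interaction between the saddle and the pole at $y=1$.

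First I would extract $[x^\lambda]\mathfrak M(x,y)$ by partial fractions in $x$. The two poles in $x$, namely $x_1(y)=1/y$ and $x_2(y)=2/(1+y)^2$, are both simple, and $|x_2|<|x_1|$ for $y$ near $1$. Residue calculus yields
\begin{equation*}
[x^\lambda]\mathfrak M(x,y) \;=\; \frac{1}{1-y}\left(\frac{A(y)\,(1+y)^{2\lambda}}{2^\lambda}\;+\;B(y)\,y^\lambda\right),
\end{equation*}
where $A(y)=\tfrac{y(2+y)}{1+y^2}$ is analytic near $y=1$ with $A(1)=\tfrac{3}{2}$, and $B(y)$ is a rational function analytic near $y=1$. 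Since $(1+y)^{2\lambda}/2^\lambda\big|_{y=1}=2^\lambda$ is exponentially larger than $y^\lambda|_{y=1}=1$, the second term is subdominant in the regime of interest. This step is a direct computation.

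The main step is the extraction of $[y^{\lambda+u}]$ from the dominant piece $\tfrac{A(y)(1+y)^{2\lambda}}{2^\lambda(1-y)}$. Because $\tfrac{1}{1-y}$ corresponds to a partial-sum operation, we have
\begin{equation*}
[y^{\lambda+u}]\frac{A(y)(1+y)^{2\lambda}}{2^\lambda(1-y)} \;=\; \sum_{k\leq \lambda+u}[y^k]\frac{A(y)(1+y)^{2\lambda}}{2^\lambda},
\end{equation*}
and each summand is a convolution of the binomial mass $\binom{2\lambda}{k}/2^\lambda$ with the Taylor coefficients of $A(y)$. Since $A$ is smooth and bounded near $y=1$, the de Moivre--Laplace local central limit theorem for the binomial produces a Gaussian shape for the summands, and a Riemann-sum approximation turns the sum into a Gaussian CDF. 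Drmota's technique executes this precisely: one deforms the $y$-contour of the Cauchy integral into a Hankel-type loop around $y=1$ combined with a saddle-point piece through $y^*=(\lambda+u)/(\lambda-u)$, and the quadratic expansion at the saddle produces the factor $\Phi(u/\sqrt{\lambda})$ with the leading prefactor $A(1)\cdot 2^\lambda=\tfrac{3}{2}\cdot 2^\lambda$.

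The relative error decomposes as follows: (i) the Edgeworth remainder in the local CLT, uniform over the relevant range of $k$, contributes $O(1/\sqrt{\lambda})$; (ii) replacing $A(y^*)$ by $A(1)$ at the saddle costs a factor $1+O(|y^*-1|)=1+O(u/\lambda)$, which after integration against the Gaussian density amplifies to the stated $O(u^2/\lambda^{3/2})$ relative error; (iii) the subdominant $B(y)y^\lambda/(1-y)$ contributes only $O(1)$ in absolute terms, hence is negligible compared to the $O(2^\lambda)$ main term. The principal obstacle is that the saddle $y^*$ and the pole $y=1$ both lie within $O(1/\sqrt{\lambda})$ of each other when $|u|=O(\sqrt{\lambda})$, so they cannot be separated by a naive contour deformation. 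The Hankel-type loop around $y=1$ must be chosen with radius $\asymp 1/\sqrt{\lambda}$ so that it passes through the saddle-point region, and the saddle expansion must be tracked uniformly in~$u$ throughout $|u|\leq R\sqrt{\lambda}$. This is precisely the step where Drmota's construction from~\cite{D1994} is essential.
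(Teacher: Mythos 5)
Your proposal follows essentially the same route as the paper's proof: partial fractions in $x$ isolating the dominant pole at $x=2/(1+y)^2$ with the same function $A(y)=y(2+y)/(1+y^2)$, $A(1)=3/2$, then Drmota's saddle-point method to get a local Gaussian for the coefficients of $A(y)(1+y)^{2\lambda}/2^\lambda$, and finally the partial-sum/Riemann-sum step converting the $1/(1-y)$ factor into the Gaussian CDF $\Phi(u/\sqrt{\lambda})$ with the stated error terms. The only difference is cosmetic (you keep the $1/(1-y)$ factor through the $x$-extraction, whereas the paper strips it first and reinstates it as a sum over $k\leq\lambda+u$), so the argument is correct and matches the paper.
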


\begin{proof}
The nature of the random variable $\widetilde\Theta$ displayed in~\eqref{eqn:Theta} implies that partial sums will play a key role. Their avatar is encoded in the factor $\frac{1}{1-y}$ of $\mathfrak M(x,y)$. It proves convenient, to first compute the asymptotic expansion of the coefficients of 
\begin{align*}
	\widetilde{\mathfrak M}(x,y) \coloneqq (1-y)  \mathfrak M(x,y).
\end{align*}

First, we extract the $n$-th coefficient with respect to $x$ by a partial fraction decomposition. We get
\begin{align*}
	  \widetilde{\mathfrak m}_{\lambda,\lambda+u} \coloneqq [x^\lambda y^{\lambda+u}] \widetilde{\mathfrak M}(x,y) &= \frac{1}{2^{\lambda+1}}[y^{\lambda+u}] \frac{y(y+2)(1+y)^{2\lambda}}{1+y^2} + \LandauO(1).
\end{align*} 
The error term arises from the second fraction in the partial fraction decomposition. Next we apply a simple generalization of~\cite[Theorem~2]{D1994}. We omit the details as the ideas carry directly over. In words, we apply a saddle point method like extensively discussed in~\cite{FS2009}. The dominant singularity $\rho_\lambda$ and the variation constant $\sigma_\lambda^2$ are given by
\begin{align*}
	\rho_\lambda &= 1 - \frac{2}{3\lambda} + \LandauO\left(\frac{1}{\lambda^2}\right), \\
	\sigma_\lambda^2 &= \frac{\lambda}{2} - \frac{7}{9} + \LandauO\left(\frac{1}{\lambda}\right).
\end{align*}
We get
\begin{align*}
	\widetilde{\mathfrak m}_{\lambda,\lambda+u} &= \frac{3}{2} \frac{2^\lambda}{\sqrt{\pi \lambda}} e^{-\frac{u^2}{\lambda}} \left( 1 + \LandauO\left(\frac{u}{\lambda}\right)\right).
\end{align*}
Next, we come back to ${\mathfrak M}(x,y)$. 
We want to compute
\begin{align*}
	[x^\lambda y^{\lambda+u}] \mathfrak M(x,y) &= \sum_{k=0}^{\lambda + u} \widetilde{\mathfrak m}_{\lambda,k}.
\end{align*}
We choose a positive number $\varepsilon > \frac{1}{2}$.
Then, we split the sum at $u = \lambda^{\varepsilon}$ into two parts. Using our asymptotic result for $\widetilde{\mathfrak m}_{\lambda,\lambda+u}$, we see that for large $\lambda$ the first sum is negligible as
\begin{align*}
	\sum_{k=0}^{\lambda-\lambda^{\varepsilon}} \widetilde{\mathfrak m}_{\lambda,\lambda+u} &\leq  
		 C (\lambda - \lambda^{\varepsilon}) \frac{2^{\lambda}}{\sqrt{\lambda}} e^{-\lambda^{2\varepsilon-1}} = 
		\Landauo\left(e^{-\lambda^\delta}\right),
\end{align*}
for $0 < \delta < 2\varepsilon-1$ and a suitable constant $C>0$. 
We continue with the second sum using our asymptotic result again. For the main term we get
\begin{align*}
	\frac{3}{2} \frac{2^\lambda}{\sqrt{\pi \lambda }} \sum_{k=-\lambda^{\varepsilon}}^{u} e^{-\frac{k^2}{\lambda}} &= \frac{3}{2} \frac{2^\lambda}{\sqrt{2\pi}} \int_{-\lambda^{\varepsilon - 1/2}}^{u/\sqrt{\lambda}} e^{-\frac{v^2}{2}} \, dv \left(1 + \LandauO\left(\frac{1}{\sqrt{\lambda}}\right)\right) \\
	&= \frac{3}{2} 2^\lambda \Phi\left(\frac{u}{\sqrt{\lambda}}\right) \left(1 + \LandauO\left(\frac{1}{\sqrt{\lambda}}\right)\right),
\end{align*}
where in the first equality we also made a change of variable scaling the path of integration by $\sqrt{\lambda}$, and in the third equality we completed the tail. This last operation only introduced an error term of order $\LandauO(e^{-n^{\varepsilon-1/2}})$.

It remains to consider the error term. Yet similar reasoning shows
\begin{align*}
	\sum_{k=-\lambda^{\varepsilon}}^{u} \frac{k}{\lambda} e^{-\frac{k^2}{\lambda}} &= 2e^{-\frac{u^2}{\lambda}} + \LandauO\left(\frac{1}{\lambda}\right).
\end{align*}
This ends the proof.
\end{proof}
\subsection{Asymptotic expansion of the mixed term}

As a next step we compute $\mathfrak m'_{\lambda,\lambda+u} = [x^\lambda y^{\lambda+u}] {\mathfrak M'}(x,y)$ from~\eqref{eqn:Mprime_identity}.

\begin{lemma}
	\label{lem:asymixedmom}
	For $\lambda \to \infty$ we have
	\begin{align*}
		\mathfrak m'_{\lambda,\lambda+u} &= \frac{7}{3} 4^\lambda \Phi\left(\frac{u}{\sqrt{\lambda}}\right) \left(1 + \LandauO\left(\frac{1}{\sqrt{\lambda}}\right) + \LandauO\left(\frac{u^2}{\sqrt{\lambda^{3} } }\right) \right).
	\end{align*}
\end{lemma}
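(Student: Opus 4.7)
The plan is to follow the strategy of Lemma~\ref{lem:asyfirstmom} essentially verbatim, with $\mathfrak M(x,y)$ replaced by the more complicated rational function $\mathfrak M'(x,y)$ from~\eqref{eqn:frakMprime_identity}. The decisive structural change is that the dominant $x$-singularity is now the zero of $1-x(1+y)^2$ rather than the zero of $1-\tfrac{x}{2}(1+y)^2$, which upgrades the leading factor from $2^\lambda$ to $4^\lambda$.

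First I would set $\widetilde{\mathfrak M'}(x,y) \coloneqq (1-y)\mathfrak M'(x,y)$ so that
\begin{align*}
\mathfrak m'_{\lambda,\lambda+u} = \sum_{k=0}^{\lambda+u}\widetilde{\mathfrak m}'_{\lambda,k},
\end{align*}
and then extract $[x^\lambda]$ via partial fractions in $x$. The four candidate poles come from the factors $1-x(1+y)^2$, $1-xy$, $1-\tfrac{x}{2}(1+y)^2$ and $1-2xy$; for $y$ in a small complex neighbourhood of $1$ the root $x=1/(1+y)^2$ is strictly smallest in modulus, so only this pole contributes to the leading asymptotics, while the others together yield a correction of order $r^\lambda$ with $r<4$. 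Writing $N(y)$ for the residue at that pole, the leading contribution becomes $[y^{\lambda+u}]N(y)(1+y)^{2\lambda}$. Since the three summands $A = 2y$, $B = \tfrac{xy^2(1+y)}{2}\,\tfrac{4-3xy-2xy^2}{(1-xy)(1-\tfrac{x}{2}(1+y)^2)}$ and $C = \tfrac{xy^2(1-4y)}{1-2xy}$ inside the big bracket of $\mathfrak M'$ have no pole at $x=1/(1+y)^2$, one simply reads $N(y) = A+B+C$ evaluated at $x = 1/(1+y)^2$, and direct substitution at $y=1$ gives $N(1) = 2 + \tfrac{11}{6} - \tfrac{3}{2} = \tfrac{7}{3}$, matching the advertised constant.

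Next I would apply the same saddle-point / Drmota-style central limit theorem as in Lemma~\ref{lem:asyfirstmom} (a small generalisation of~\cite[Theorem~2]{D1994}) to the univariate function $N(y)(1+y)^{2\lambda}$; the dominant singularity and variance constant will have the qualitative form $\rho_\lambda = 1 + \LandauO(1/\lambda)$ and $\sigma_\lambda^2 = \tfrac{\lambda}{2} + \LandauO(1)$, leading to
\begin{align*}
\widetilde{\mathfrak m}'_{\lambda,\lambda+u} = \frac{7}{3}\,\frac{4^\lambda}{\sqrt{\pi\lambda}}\, e^{-u^2/\lambda}\,\bigl(1+\LandauO(u/\lambda)\bigr).
\end{align*}
Finally, I would recover the partial sum $\mathfrak m'_{\lambda,\lambda+u}$ by the same split-at-$\lambda^\varepsilon$ and complete-the-tail procedure already used for the first moment, converting the Riemann sum into an integral evaluating to $\tfrac{7}{3}\,4^\lambda\,\Phi(u/\sqrt{\lambda})$ up to the stated error terms.

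The only real obstacle is technical bookkeeping: verifying uniformly in $y$ near $1$ that the three non-dominant partial-fraction summands really do contribute only $\LandauO(r^\lambda)$, and checking the non-degeneracy hypotheses of the Drmota-style theorem for the analytic prefactor $N(y)$ (in particular that $N(1)\neq 0$, which is already confirmed). No new conceptual ingredient beyond those already exploited in the first-moment lemma should be required.
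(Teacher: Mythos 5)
Your proposal is correct and follows essentially the same route as the paper, which itself only sketches this step: a partial fraction decomposition in $x$ of $\mathfrak M'$ whose four poles leave only the one at $x=1/(1+y)^2$ contributing at order $4^\lambda$, followed by the same saddle-point and partial-summation machinery as in Lemma~\ref{lem:asyfirstmom}. Your explicit residue evaluation $2+\tfrac{11}{6}-\tfrac{3}{2}=\tfrac{7}{3}$ correctly recovers the stated constant and in fact supplies more detail than the paper's proof, which defers the computation to Maple.
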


\begin{proof}
	The ideas and techniques are the same as in the proof of Lemma~\ref{lem:asyfirstmom} and were performed with the help of Maple. As the denominator is of order $4$ in $x$, we perform a partial fraction decomposition with respect to $x$. This gives $4$ rational functions where we extract coefficient of $x^{\lambda}$ and get rational functions in $y$ where only one of them has coefficients of order $\LandauO(4^{\lambda})$. We omit these technical steps.
\end{proof}

\subsection{Asymptotic expansion of the second moment}
We want to show an asymptotic formula for the values $\mathfrak m^{(2)}_{\lambda,\lambda+u}$.

\begin{lemma}
	\label{lem:asysecondmom}
	For $\lambda \to \infty$ we have
	\begin{align*}
		\mathfrak m^{(2)}_{\lambda,\lambda+u} &= \frac{7}{3} 4^\lambda \, \Phi \left(\frac{u}{\sqrt{\lambda}}\right)^2 \left(1 + \LandauO\left(\frac{1}{\sqrt{\lambda}}\right) + \LandauO\left(\frac{u^2}{\sqrt{\lambda^{3} } }\right) \right).
	\end{align*}
\end{lemma}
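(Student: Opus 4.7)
The plan is to extend the strategy of Lemmas~\ref{lem:asyfirstmom} and~\ref{lem:asymixedmom} to the trivariate generating function $A(x,y,z)$ appearing in~\eqref{eqn:M2_identity}. Writing
\begin{equation*}
2^\lambda \mathfrak m^{(2)}_{\lambda,\lambda+u} = \sum_{j,k \leq \lambda+u} a_{\lambda,j,k}, \qquad a_{\lambda,j,k} = [x^\lambda y^j z^k] A(x,y,z),
\end{equation*}
I would first carry out the asymptotic analysis of the raw coefficients $a_{\lambda,\lambda+u,\lambda+v}$ and only at the very end complete the two independent partial summations. These two summations are exactly the mechanism that turns a single local Gaussian in $(u,v)$ into the product $\Phi(u/\sqrt\lambda)^2$ in the statement, in direct parallel with the single partial summation that produced $\Phi(u/\sqrt\lambda)$ in Lemma~\ref{lem:asyfirstmom}.

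The analytic core is the asymptotic evaluation of $a_{\lambda,\lambda+u,\lambda+v}$. After clearing inner fractions, the denominator of $A$ factors into $1-4xyz$, $1-2xy(1+yz)$, $1-2xz(1+yz)$ and the irreducible factor
\begin{equation*}
D(x,y,z) = 1 - x(1+yz)^2 - \frac{xyz}{1-2xz(1+yz)} - \frac{xyz}{1-2xy(1+yz)},
\end{equation*}
which is exactly the denominator studied in~\cite{DKS2016}. A partial fraction decomposition with respect to $x$ isolates the piece coming from $D$; its $[x^\lambda]$ coefficients grow like $4^\lambda$, whereas the contributions from the other factors grow strictly more slowly and can be absorbed into the error term. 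For the dominant piece I would then apply a bivariate version of the Drmota-type saddle-point scheme of~\cite{D1994} already used in the previous two lemmas: expand the dominant singularity $\rho_\lambda(y,z) = 1/4 + \LandauO(1/\lambda)$ of $D$ around the symmetric point $y=z=1$ (whose local quadratic form can be lifted from~\cite{DKS2016}) and extract the leading Gaussian approximation
\begin{equation*}
a_{\lambda,\lambda+u,\lambda+v} = c\,\frac{4^\lambda}{\lambda}\,e^{-(u^2+v^2)/\lambda}\left(1 + \LandauO(\lambda^{-1/2})\right),
\end{equation*}
uniformly for $|u|,|v|\leq\lambda^\varepsilon$ with $\varepsilon>1/2$. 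The constant $c$ is fixed by residue normalisation and must match the pre-factor $7/3$ in the final statement.

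With this local formula in hand, the finish is essentially bookkeeping. As in Lemma~\ref{lem:asyfirstmom}, I would truncate each of the two summations at $\lambda - \lambda^\varepsilon$ (the tails being super-polynomially small), substitute $j = \lambda + s\sqrt\lambda$ and $k = \lambda + t\sqrt\lambda$, and pass to Riemann sums. Since the local exponential splits as $e^{-s^2}e^{-t^2}$, the double sum factorises into a product of two one-dimensional Gaussian integrals, each approaching $\Phi(u/\sqrt\lambda)$; their product, together with the $2^\lambda$ normalisation in the definition of $\mathfrak m^{(2)}_{\lambda,k}$, yields $\tfrac{7}{3}\,4^\lambda\,\Phi(u/\sqrt\lambda)^2$ up to the stated multiplicative error terms.

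The principal obstacle I anticipate is the bivariate saddle step. One has to verify that the Hessian of $\log\rho_\lambda(y,z)$ at $y=z=1$ is asymptotically diagonal, so that $y$ and $z$ decouple in the limit and the double Gaussian integral genuinely factorises into a product of two independent one-dimensional ones. A surviving cross-term of order $uv/\lambda$ would produce a correlated bivariate Gaussian and destroy the clean $\Phi\cdot\Phi$ structure. Checking this diagonalisation -- which is plausible from the $y\leftrightarrow z$ symmetry of $D$ and from the corresponding analysis in~\cite{DKS2016} -- together with careful control of the subdominant partial fraction pieces is where the real technical work sits; the remaining steps are direct transcriptions of the arguments in Lemmas~\ref{lem:asyfirstmom} and~\ref{lem:asymixedmom}.
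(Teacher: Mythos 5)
Your proposal follows essentially the same route as the paper: the paper likewise defers the two partial summations, first analyses the raw coefficients $\frac{1}{2^\lambda}[x^\lambda y^{\lambda+u_1}z^{\lambda+u_2}]\,G/H$ using the contour of~\cite{DKS2016} (whose denominator is exactly your $D=H$) combined with the perturbation idea of~\cite{D1994}, obtains the decoupled local Gaussian $\frac{7}{3}\frac{4^\lambda}{\pi\lambda}\e^{-(u_1^2+u_2^2)/\lambda}$ --- so the cross-term you worry about does vanish, confirmed by the explicit computation rather than assumed --- and then performs the two summations as in Lemma~\ref{lem:asyfirstmom}. One bookkeeping slip in your sketch: the \emph{un-normalised} coefficients $a_{\lambda,\lambda+u,\lambda+v}=[x^\lambda y^{\lambda+u}z^{\lambda+v}]A$ grow like $8^\lambda/\lambda$, not $4^\lambda/\lambda$, since at $y=z=1$ the dominant root of $H$ in $x$ is $1/8$ rather than $1/4$ (consistent with $\sum_{k,\ell}a_{\lambda,k,\ell}=\sum_{2^\lambda\leq n<2^{\lambda+1}}(n+1)^2\sim\frac{7}{3}8^\lambda$); the factor $4^\lambda/(\pi\lambda)$ only appears after the $2^{-\lambda}$ normalisation built into $\mathfrak m^{(2)}$. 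With your stated constants the final bookkeeping would produce $\frac{7}{3}2^\lambda\Phi(u/\sqrt{\lambda})^2$ instead of $\frac{7}{3}4^\lambda\Phi(u/\sqrt{\lambda})^2$, so the normalisation needs to be tracked consistently, but this does not affect the structure of the argument.
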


\begin{proof}
We use the same idea as before performing the summation (here in two variables) later. But as we are dealing with a function in three variables, it is more suitable to use the techniques of~\cite[Proposition~4.3]{DKS2016}. We only sketch the main differences here.
(We are aware of the theory of Pemantle and Wilson~\cite{PW2013}, which provides a method to obtain asymptotics of certain multivariate functions.
However we noted in~\cite{DKS2016} that the trivariate function considered there turns out to be a limit case which has to be treated separately.
As this function possesses, up to a factor, the same denominator as our function $\mathfrak M^{(2)}$, the same restriction applies here.)
Set
\[
G(x,y,z)\coloneqq 4yz
+xz^2\frac{2y+2xy^2z\frac{1-4yz}{1-4xyz}}{1-2xz(1+yz)}
+xy^2\frac{2z+2xyz^2\frac{1-4yz}{1-4xyz}}{1-2xy(1+yz)}
+xy^2z^2\frac{1-4yz}{1-4xyz}
\]
and
\[H(x,y,z)\coloneqq 1-x(1+yz)^2
-\frac{xyz}{1-2xz(1+yz)}
-\frac{xyz}{1-2xy(1+yz)}.
\]
Then, by definition, we have
\begin{align*}
\mathfrak M^{(2)}(x,y,z)=
\frac 1{2^{\lambda}(1-y)(1-z)}\frac{G(x,y,z)}{H(x,y,z)}.
\end{align*}

The first difference is that we continue to compute the asymptotic expansion of the coefficients of $\frac{G(x,y,z)}{H(x,y,z)}$ first. As done before, we define the shorthand 
$$\widetilde{\mathfrak m}^{(2)}_{\lambda,\lambda+u_1,\lambda+u_2} \coloneqq \frac{1}{2^\lambda}[x^\lambda y^{\lambda+u_1} z^{\lambda+u_2}]  \frac{G(x,y,z)}{H(x,y,z)}.$$
Note that the denominator $H$ is identical to the denominator in~\cite{DKS2016}. Thus we can follow the known computations. Here we introduce a perturbation in the y- and z-coordinate given by $\lambda+u_1$ and $\lambda+u_2$, respectively. Then, Cauchy's integral formula over the same contour as in~\cite{DKS2016} gives

\begin{align*}
\bigl[x^{\lambda} &y^{\lambda+u} z^{\lambda+u}\bigr]\, F(x,y,z)
 = O\bigl(8^{(1-\varepsilon)\lambda}\bigr)\\
&+ \frac 1{(2\pi i)^2} \iint\limits_{\gamma_1 \times \gamma_1}
 \frac{-G(f(y,z),y,z)}{xyz H_x(f(y,z),y,z)} \bigl(f(y,z)y^{1+u_1/\lambda}z^{1+u_2/\lambda}\bigr)^{-\lambda} \mathrm dy\,\mathrm dz.
\end{align*}
where the implied constant does not depend on $u$. Note the missing $(1-y)(1-z)$ factors in the denominator. 

Then we continue like in the old proof, yet combine it with the ideas of~\cite{D1994} to capture the perturbation by $u$ in a Gaussian integral. Finally, we get
\begin{align*}
	\widetilde{\mathfrak m}^{(2)}_{\lambda,\lambda+u_1,\lambda+u_2} = \frac{7}{3} \frac{4^\lambda}{\pi \lambda} e^{-\frac{u_1^2+u_2^2}{\lambda}} \left( 1 + \LandauO\left(\frac{u}{\lambda}\right)\right).
\end{align*}

It remains to compute the partial sums in the second and third variable. However, this is analogous to the proof of Lemma~\ref{lem:asyfirstmom}, and we get our final result.
\end{proof}

\subsection{Small second moment}
Finally, we show that~\eqref{eq:Xlkdefspecific} is the proper rescaling and that it possesses a small second moment. We define the random variable $\hat{X}_{\lambda,k}$ uniformly distributed on $\bigl\{2^\lambda,\ldots,2^{\lambda+1}-1\bigr\}$ according to the following ansatz
\begin{align*}
	\hat{X}_{\lambda,k} \coloneqq n \mapsto \widetilde\Theta(k,n) - v_{\lambda,k} n - w_{\lambda,k},
\end{align*}
where $v_{\lambda,k}$ and $w_{\lambda,k}$ are independent of $n$. Note that this random variable encodes a more general rescaling as the constant term $w_{\lambda,k}$ is also present. We will see that its size (if properly chosen) does not influence the result. 

The second moment of $\hat{X}_{\lambda,k}$ is equal to
\begin{align*}
	\E(\hat{X}_{\lambda,k}^2) &= \E\left(\widetilde\Theta(k,n)^2\right) -2 v_{\lambda,k} \E\left(n \widetilde\Theta(k,n)\right)  - 2 w_{\lambda,k}\E\left(\widetilde\Theta(k,n)\right)\\
	                    &+ v_{\lambda,k}^2 \E\left(n^2\right) + 2 v_{\lambda,k} w_{\lambda,k} \E\left(n\right) + w_{\lambda,k}^2 .
\end{align*}
As $\hat{X}_{\lambda,k}$ is uniformly distributed on $\bigl\{2^\lambda,\ldots,2^{\lambda+1}-1\bigr\}$ we directly compute
\begin{align*}
	\E\left(n\right) &= \frac{3}{2} 2^\lambda - \frac{1}{2}, \quad \text{ and }\\
	\E\left(n^2 \right) &= \frac{7}{3} 4^\lambda - \frac{3}{2} 2^\lambda + \frac{1}{6}.
\end{align*}
Therefore, Lemmata~\ref{lem:asyfirstmom}, \ref{lem:asymixedmom}, and \ref{lem:asysecondmom} show that for 
\begin{align*}
	v_{\lambda,k} &= \LandauO\left(1\right) &\text{ and } && 
	w_{\lambda,k} &= \LandauO\left(\frac{2^\lambda}{\sqrt{\lambda}}\right)
\end{align*}
the dominant terms of the second moment $\E(\hat{X}_{\lambda,k}^2)$ have an exponential growth of order $4^\lambda$. In particular, we get the explicit result
\begin{align*}
	\E(\hat{X}_{\lambda,k}^2) &= \frac{7}{3} 4^{\lambda} \left( v_{\lambda,k} - \Phi \left(\frac{u}{\sqrt{\lambda}}\right) \right)^2 + \LandauO\left(\frac{4^\lambda}{\sqrt{\lambda}}\right).
\end{align*}
Thus, we see that if we choose 
\begin{align*}
	v_{\lambda,k} &= \Phi\left(\frac{u}{\sqrt{\lambda}}\right)\left(1 + \LandauO\left(\frac{1}{\lambda^{1/4}}\right)\right) & \text{ and } &&
	w_{\lambda,k} &= \LandauO\left(\frac{2^\lambda}{\sqrt{\lambda}}\right).
\end{align*}
it gives 
\begin{align*}
	\E\left(X_{\lambda,k}^2\right) = \LandauO\left(\frac{4^\lambda}{\sqrt{\lambda}}\right).
\end{align*}
This proves Proposition~\ref{prp_closeness}. As mentioned before, this proof also gives the possible size for a constant (with respect to $n$) term $w_{\lambda,k}$ such that the result still holds. 

Furthermore, it is also possible to specify the constant $C$ in the speed of convergence, and get further error terms automatically. Yet, the computations quickly become cumbersome.


\section{Acknowledgements}
We wish to thank Michael Drmota for pointing out his paper~\cite{D1994}
and for several fruitful discussions.

\bibliographystyle{plain}
\bibliography{SW}

\begin{thebibliography}{10}

\bibitem{AS1992}
Jean-Paul Allouche and Jeffrey Shallit.
\newblock The ring of {$k$}-regular sequences.
\newblock {\em Theoret. Comput. Sci.}, 98(2):163--197, 1992.

\bibitem{BG2001}
Guy Barat and Peter~J. Grabner.
\newblock Distribution of binomial coefficients and digital functions.
\newblock {\em J. London Math. Soc. (2)}, 64(3):523--547, 2001.

\bibitem{BG2016}
Guy Barat and Peter~J. Grabner.
\newblock Spatial equidistribution of binomial coefficients modulo prime
  powers.
\newblock {\em Unif. Distrib. Theory}, 11(2):151--161, 2016.

\bibitem{C1967}
L.~Carlitz.
\newblock The number of binomial coefficients divisible by a fixed power of a
  prime.
\newblock {\em Rend. Circ. Mat. Palermo (2)}, 16:299--320, 1967.

\bibitem{DW1990}
Kenneth~S. Davis and William~A. Webb.
\newblock Lucas' theorem for prime powers.
\newblock {\em European J. Combin.}, 11(3):229--233, 1990.

\bibitem{D1975}
Hubert Delange.
\newblock Sur la fonction sommatoire de la fonction``somme des chiffres''.
\newblock {\em Enseignement Math. (2)}, 21(1):31--47, 1975.

\bibitem{D1919}
Leonard~Eugene {Dickson}.
\newblock {\em {History of the theory of numbers. Vol. I: Divisibility and
  primality.}}
\newblock Carnegie Institution of Washington, Washington, 1919.
\newblock Chapter IX: ``Divisibility of factorials and multinomial
  coefficients''.

\bibitem{D1994}
Michael Drmota.
\newblock A bivariate asymptotic expansion of coefficients of powers of
  generating functions.
\newblock {\em European J. Combin.}, 15(2):139--152, 1994.

\bibitem{DKS2016}
Michael Drmota, Manuel Kauers, and Lukas Spiegelhofer.
\newblock On a {C}onjecture of {C}usick {C}oncerning the {S}um of {D}igits of
  {$n$} and {$n+t$}.
\newblock {\em SIAM J. Discrete Math.}, 30(2):621--649, 2016.
\newblock arXiv:1509.08623.

\bibitem{EH2016}
Jordan Emme and Pascal Hubert.
\newblock {C}entral {L}imit {T}heorem for probability measures defined by
  sum-of-digits function in base $2$.
\newblock 2016.
\newblock Preprint. arXiv:1605.06297.

\bibitem{EP2015}
Jordan Emme and Alexander Prikhod'ko.
\newblock On the asymptotic behaviour of the correlation measure of
  sum-of-digits function in base 2.
\newblock 2016.
\newblock Preprint. arXiv:1504.01701.

\bibitem{F1947}
N.~J. Fine.
\newblock Binomial coefficients modulo a prime.
\newblock {\em Amer. Math. Monthly}, 54:589--592, 1947.

\bibitem{FS2009}
Philippe Flajolet and Robert Sedgewick.
\newblock {\em Analytic combinatorics}.
\newblock Cambridge University Press, Cambridge, 2009.

\bibitem{F1967}
Robert~D. Fray.
\newblock Congruence properties of ordinary and {$q$}-binomial coefficients.
\newblock {\em Duke Math. J.}, 34:467--480, 1967.

\bibitem{G1899}
James Glaisher.
\newblock On the residue of a binomial-theorem coefficient with respect to a
  prime modulus.
\newblock {\em Quarterly Journal of Pure and Applied Mathematics}, 30:150--156,
  1899.

\bibitem{G1992}
Andrew Granville.
\newblock Zaphod {B}eeblebrox's brain and the fifty-ninth row of {P}ascal's
  triangle.
\newblock {\em Amer. Math. Monthly}, 99(4):318--331, 1992.

\bibitem{G1997}
Andrew Granville.
\newblock {\em Arithmetic properties of binomial coefficients. {I}. {B}inomial
  coefficients modulo prime powers}, volume~20 of {\em CMS Conf. Proc.}
\newblock Amer. Math. Soc., Providence, RI, 1997.

\bibitem{G2017}
Florian Greinecker.
\newblock Spatial equidistribution of combinatorial number schemes.
\newblock {\em J. Fractal Geom.}, 4(2):105--126, 2017.

\bibitem{H1997}
John~M. Holte.
\newblock Asymptotic prime-power divisibility of binomial, generalized
  binomial, and multinomial coefficients.
\newblock {\em Trans. Amer. Math. Soc.}, 349(10):3837--3873, 1997.

\bibitem{H1971}
F.~T. Howard.
\newblock The number of binomial coefficients divisible by a fixed power of
  {$2$}.
\newblock {\em Proc. Amer. Math. Soc.}, 29:236--242, 1971.

\bibitem{K1968}
G.~S. Kazandzidis.
\newblock Congruences on the binomial coefficients.
\newblock {\em Bull. Soc. Math. Gr\`ece (N.S.)}, 9(fasc. 1):1--12, 1968.

\bibitem{KW1989}
Donald~E. Knuth and Herbert~S. Wilf.
\newblock The power of a prime that divides a generalized binomial coefficient.
\newblock {\em J. Reine Angew. Math.}, 396:212--219, 1989.

\bibitem{K1852}
E.~E. Kummer.
\newblock {\"Uber die Erg\"anzungss\"atze zu den allgemeinen
  Reciprocit\"atsgesetzen}.
\newblock {\em J. Reine Angew. Math.}, 44:93--146, 1852.

\bibitem{L1878}
E.~Lucas.
\newblock Sur les congruences des nombres eul\'eriens et les coefficients
  diff\'erentiels des functions trigonom\'etriques suivant un module premier.
\newblock {\em Bull. Soc. Math. France}, 6:49--54, 1878.

\bibitem{PW2013}
Robin Pemantle and Mark~C. Wilson.
\newblock {\em Analytic combinatorics in several variables}, volume 140 of {\em
  Cambridge Studies in Advanced Mathematics}.
\newblock Cambridge University Press, Cambridge, 2013.

\bibitem{R2011}
Eric Rowland.
\newblock The number of nonzero binomial coefficients modulo {$p^\alpha$}.
\newblock {\em J. Comb. Number Theory}, 3(1):15--25, 2011.

\bibitem{S1974}
David Singmaster.
\newblock Notes on binomial coefficients. {III}. {A}ny integer divides almost
  all binomial coefficients.
\newblock {\em J. London Math. Soc. (2)}, 8:555--560, 1974.

\bibitem{Si1980}
David Singmaster.
\newblock {\em Divisibility of binomial and multinomial coefficients by primes
  and prime powers}.
\newblock Fibonacci Assoc., Santa Clara, Calif., 1980.

\bibitem{OEIS}
N.~J.~A. Sloane.
\newblock The {O}n-{L}ine {E}ncyclopedia of {I}nteger {S}equences, 2017.
\newblock Published electronically at {\tt https://oeis.org}.

\bibitem{SW2016}
Lukas Spiegelhofer and Michael Wallner.
\newblock An explicit generating function arising in counting binomial
  coefficients divisible by powers of primes.
\newblock {\em Acta Arith.}, 2017.
\newblock Accepted for publication. arXiv:1604.07089.

\bibitem{SW2017}
Lukas Spiegelhofer and Michael Wallner.
\newblock The {T}u--{D}eng {C}onjecture holds almost surely, 2017.
\newblock Preprint. arXiv:1707.07945.

\end{thebibliography}
\end{document}